\newtheorem{theorem}{Theorem}[section]
\newtheorem{lemma}[theorem]{Lemma}
\newtheorem{corollary}[theorem]{Corollary}
\newtheorem{remark}[theorem]{Remark}
\newtheorem{example}[theorem]{Example}
\newtheorem{proposition}[theorem]{Proposition}
\theoremstyle{definition}
\numberwithin{equation}{section}
\begin{document}

\title[]
{Regularity of solutions to Kolmogorov equations with perturbed drifts}

\author{Vladimir I. Bogachev$^{1}$,
Egor D. Kosov$^{2}$,
 Alexander V. Shaposhnikov$^{3}$
 }

\thanks{$^1$Department of Mechanics and Mathematics,
Moscow State University, Moscow, 119991, Russia;
National Research University, Higher School of Economics,  Usacheva 6, 119048 Moscow, Russia,
vibogach@mail.ru}
\thanks{$^2$Department of Mechanics and Mathematics,
Moscow State University, Moscow, 119991, Russia;
National Research University, Higher School of Economics,  Usacheva 6, 119048 Moscow, Russia,
ked${}_{-}$2006@mail.ru}
\thanks{$^3$Steklov Mathematical Institute of Russian Academy of Science,
8 Gubkina Street, 119991 Moscow, Russia, shal1t7@mail.ru, ashaposhnikov@mi.ras.ru}

\begin{abstract}
We prove that a probability solution of the stationary Kolmogorov equation
generated by a  first order perturbation $v$ of the Ornstein--Uhlenbeck
operator $L$ possesses a highly integrable density with respect to the Gaussian
measure satisfying the non-perturbed equation provided that $v$ is sufficiently
integrable. More generally, a similar estimate is proved for solutions to
inequalities connected with Markov semigroup generators under the curvature condition $CD(\theta,\infty)$.
For perturbations from $L^p$ an analog of the Log-Sobolev inequality is obtained.
It is also proved in the Gaussian case that the gradient of the density
is integrable to all powers. We obtain dimension-free bounds on the density and its gradient,
which also  covers the infinite-dimensional case.

{\bf Keywords:} Kolmogorov equation, Ornstein--Uhlenbeck
operator, curvature condition, perturbation of drift, integrability of density

MSC: 35J15, 46G12, 60J60
\end{abstract}

\maketitle

\section{Introduction}

There is a vast literature on perturbations of Ornstein--Uhlenbeck operators
and their generalizations by vector fields $v$ that can be regarded as small
as compared to the main first order term $-x$.
Many authors studied Dirichlet forms obtained by such perturbations,
diffusion processes with perturbed drifts, and the corresponding stationary distributions
or solutions to the stationary Fokker--Planck--Kolmogorov equation, both
in finite and infinite dimensions.
In particular,
Shigekawa \cite{Shig87} proved  his celebrated result
that, given a centered
Gaussian measure $\gamma$ with the Cameron--Martin
space $H$
on a separable Banach space~$X$, for every bounded Borel $H$-valued vector field $v$
there is a Borel probability  measure   $\mu$ on $X$ absolutely continuous with respect
to $\gamma$ and
satisfying the perturbed  Fokker--Planck--Kolmogorov equation
\begin{equation}\label{FPKeq1}
L_v^*\mu=0,
\end{equation}
where $L$ is the Ornstein--Uhlenbeck operator and
$$
L_v \varphi=L\varphi +(v,\nabla \varphi)_H.
$$
Moreover, $\mu=f\cdot\gamma$ with $f$ belonging to the Sobolev space
$W^{2,1}(\gamma)$ over~$\gamma$.
In the finite-dimensional case
$$
L_v \varphi(x)=\Delta\varphi(x) -\langle x, \nabla \varphi(x)\rangle
+\langle v(x), \nabla \varphi(x)\rangle .
$$
The uniqueness of invariant measures was established in \cite{Shig87} in the case of measures
absolutely continuous with respect to~$\gamma$.
It was later shown in \cite{BR95} (see also  \cite{BRW01})
that all probability solutions to this equation are absolutely continuous with respect to $\gamma$
provided that $|v|_H\in L^2(\mu)$. Moreover, it is also true that for $f:=d\mu/d\gamma$
one has $\sqrt{f}\in W^{2,1}$ (but not always $f\in  W^{2,1}$) and
$$
\int \frac{|\nabla f|_H^2}{f^2}\, d\mu\le \int |v|_H^2 \, d\mu .
$$
In particular, the uniqueness result holds in the class of all probability solutions
if $v$ is bounded. For unbounded $b$ the uniqueness assertion can fail
even in the one-dimensional case.
The existence and uniqueness statements were reinforced by Hino~\cite{Hino98}
by replacing the boundedness  with the condition $\exp (\theta |v|_H^2)\in L^1(\gamma)$,
$\theta>2$.
In this case, one also  has the inclusions $f\in W^{2,1}(\gamma)$ and $f\in L^p(\gamma)$
for all $p\in [1, (\theta/2)^{1/2})$. Further extensions to measure metric spaces have
been obtained by Suzuki~\cite{Suzuki}.

For a general Fokker--Plank--Kolmogorov equation
$$
\Delta \mu -{\rm div}(b\mu)=0
$$
with respect to a Borel probability  measure $\mu$ on $\mathbb{R}^d$ with a Borel
vector field $b$ such that $|b|$ is locally integrable with respect to $\mu$,
which is understood as the identity
$$
\int [\Delta\varphi +\langle b,\nabla \varphi\rangle]\, d\mu=0
\quad \forall\, \varphi\in C_0^\infty,
$$
i.e., the equality is interpreted in the sense of distributions,
the following is known (see \cite{BR95},  \cite{BKR01}, and
surveys in \cite{BKR09}, \cite{BKRS}):

$\bullet$ the measure $\mu$ is always absolutely continuous with respect to Lebesgue measure,
i.e., is given by a density~$\varrho$;

$\bullet$ if $|b|^p$ is locally Lebesgue integrable or locally $\mu$-integrable with some
$p>d$, then the density $\varrho$ of $\mu$ belongs to the local Sobolev class
$W^{p,1}_{loc}$; if $|b|\in L^p(\mu)$ with some $p>d$, then
$\varrho \in W^{p,1}(\mathbb{R}^d)$ and $\|\varrho\|_\infty<\infty$;

$\bullet$ if $|b|\in L^2(\mu)$, then $\sqrt{\varrho}\in W^{2,1}(\mathbb{R}^d)$ and
$$
\int \frac{|\nabla \varrho|^2}{\varrho}\, dx\le \int |b|^2\, d\mu,
$$
where we set $|\nabla \varrho|^2/\varrho=0$ on $\{\varrho=0\}$.

It is not known whether there is an exact $L^p$-analog of the last result.
However, it has been shown in \cite{BPS19} that

$\bullet$  the condition $|b|\in L^1(\mu)$ is not
sufficient for the inclusion $|\nabla \varrho|\in L^1(\mathbb{R}^d)$, but gives
the inclusion $\varrho\in H^{r,\alpha}(\mathbb{R}^d)$ to the fractional Sobolev class
for each $r > 1$ and $\alpha < 1 - d(r - 1)/r$, where
$1 - d(r - 1)/r > 0$ whenever $1 < r < d/(d - 1)$; in particular, $\varrho\in L^s(\mathbb{R}^d)$ for each
exponent $s \in [1, d/(d - 1))$;

$\bullet$  if $|b|\in L^p(\mu)$ with some $p \in (1, d]$, then $\varrho\in W^{q,1}(\mathbb{R}^d)$
 for each exponent $q < d/(d + 1 - p)$, hence $\varrho\in  L^s(\mathbb{R}^d)$ for all $s < d/(d - p)$.

Although the condition $|b|\in L^1(\mu)$ is not enough for the membership of
$\varrho$ in the Sobolev class $W^{1,1}(\mathbb{R}^d)$, it implies
some weaker version of the logarithmic Sobolev inequality (see \cite{BSS19}).
Sufficient conditions for the boundedness of $\varrho$ can be found
in \cite{FFMP}, \cite{BKR06}, \cite{BRS07} along with some
other bounds (see \cite{BKRS} for a survey). Other related questions have been studied
in \cite{BDPRS06}, \cite{Es},
\cite{Hino00}, \cite{HinoM18},
\cite{Manca}, \cite{MetaPru05}, \cite{MetaS12},
for the infinite-dimensional case see \cite{DP}, \cite{DPZ}, and
\cite{Shig18}.

In this paper, we complement this picture by the following three results:

(i) a higher integrability result for the density of the solution to the perturbed equation,
in particular, some exponential integrability in the case of bounded $v$;
the main result (Theorem~\ref{main1}) deals with first order perturbations
of Markov generators satisfying the $CD(\theta, \infty)$ condition, so the aforementioned
Gaussian case is a quite special example, and in this case
we obtain that $\exp (\varepsilon |\log (f\wedge 1) |^2)\in L^1(\gamma)$ for all numbers
$\varepsilon < \bigl(4\pi^2\|\,|v|_H\|_\infty^2\bigr)^{-1}$, moreover, our estimates of the Orlicz norms
of the density are dimension-free;

(ii) if $|v|\in L^p(\mu)$ with some $p>2$, then $f \log^{\alpha}(1 + f)\in L^1(\mu)$ whenever
$\alpha < 2 \wedge \frac{p + 2}{4}$ (Theorem~\ref{th:main_l_p});

(iii) in the Gaussian case,
it is shown that
the density of the solution with respect to the Gaussian measure belongs
to the Gaussian Sobolev class $W^{p,1}(\gamma)$ for all $p\ge 1$
provided that the perturbation $v$ belongs to  a suitable Orlicz class,
e.g., is bounded, and a sufficient Orlicz integrability condition
is given for the inclusion in $W^{p,1}(\gamma)$ with a given~$p$,
moreover, our estimates of the $L^p$-norms of the gradient are dimension-free (Theorem~\ref{t-grad}).

In particular, the cited results of Shigekava and Hino are reinforced
and the first sufficient condition for the inclusion of the density in all
$W^{p,1}(\gamma)$ is given in terms of the integrability of~$v$; this result is new
even for bounded~$v$.
Some further extensions are possible to the case of a non-constant diffusion
term, but to keep the presentation less technical we confine ourselves to the unit diffusion matrix.

\section{Integrability of densities}

In this section, we work in the framework of abstract Markov triples (see \cite{BGL}).
Let  $(E, \mathcal{E}, \mu)$ be a probability space and
let $\{T_t\}_{t\ge0}$ be a strongly continuous semigroup of Markov operators on $L^2(\mu)$,
i.e., $0\le T_t \varphi\le 1$ whenever $0\le \varphi\le 1$ and $T_t1=1$.
We also assume that
$$
T_t\varphi\to\int\varphi\, d\mu\quad \hbox{as $t\to \infty$.}
$$
Let $L$ be the generator of this semigroup and
let $\mathcal{D}(L)$ be its domain.
We assume that  $L$ is symmetric:
$$
\int_E \varphi L\psi\, d\mu = \int_E \psi L\varphi\, d\mu
\quad \forall\, \varphi, \psi\in \mathcal{D}(L).
$$
Suppose, in addition, that $\mathcal{A}$ is an algebra of bounded measurable functions
dense in $\mathcal{D}(L)$ and in all $L^p(\mu)$ with $p<\infty$,
contains~$1$, and is stable under $L$ and  under compositions with $C^\infty$ functions of several variables,
which means that $F(f_1,\ldots,f_n)\in \mathcal{A}$ whenever $F\in C^\infty(\mathbb{R}^n)$
and $f_i\in \mathcal{A}$.
Let
$$
\Gamma(\varphi,\psi) = \frac{1}{2}[L(\varphi\psi) - \varphi L\psi - \psi L\varphi]
$$
for $\varphi, \psi \in \mathcal{A}$ and let
$$\Gamma(\varphi) = \Gamma(\varphi, \varphi).
$$
One has the following integration by parts formula:
$$
\int_E \varphi L\psi\, d\mu = - \int_E \Gamma(\varphi, \psi)\, d\mu,
\quad \varphi, \psi \in \mathcal{A}.
$$
Finally, we also assume that $L$ is a diffusion operator:
 for all $\Psi\in C^\infty(\mathbb{R}^k)$ and $\psi_1, \ldots, \psi_k\in \mathcal{A}$ one has
$$
L\Psi(\psi_1, \ldots, \psi_k) = \sum_{j=1}^{k}\partial_j\Psi(\psi_1,\ldots, \psi_k) L\psi_j
+\sum_{i, j=1}^{k}\partial^2_{i, j}\Psi(\psi_1, \ldots, \psi_k)\Gamma(\psi_i, \psi_j).
$$

We say that the curvature-dimension condition $CD(\theta, \infty)$ with some number $\theta$
holds if
$$
\Gamma_2(\varphi)\ge \theta \Gamma(\varphi)
\quad \forall\, \varphi\in \mathcal{A},
$$
where
$$
\Gamma_2(\varphi, \psi): =
\frac{1}{2}[L\Gamma(\varphi,\psi) - \Gamma(\varphi, L\psi) - \Gamma(L\varphi, \psi)],
\quad
\Gamma_2(\varphi) := \Gamma_2(\varphi, \varphi).
$$

It follows from our assumptions that
$$
L^*\mu=0,
$$
provided we define this equation as the identity
$$
\int L\varphi\, d\mu=0 \quad \forall\, \varphi\in \mathcal{A}.
$$
However, our main object will not be the solution $\mu$, but  probability solutions
$f\cdot \mu$ to some associated inequalities and equations $L_v^*(f\cdot\mu)=0$ with certain perturbations of $L$.

An important  example
is the case where  $\mu = e^{-W}\, dx$ for some $W\in C^2(\mathbb{R}^d)$ and
$$
L\varphi = \Delta\varphi - \langle\nabla W, \nabla\varphi\rangle.
$$
If $W(x) = |x|^2/2+C$, then $\mu$ is the standard Gaussian measure
on $\mathbb{R}^d$ denoted by $\gamma$.
One can also consider the Gaussian measure $\gamma$
in the infinite dimensional setting. In this case $L$
is the Ornstein--Uhlenbeck operator associated with~$\gamma$.
For the above measure the curvature-dimension condition $CD(\theta, \infty)$
holds if $D^2W\ge \theta {\rm I}$, since
$$
\Gamma_2(\varphi) = \|D^2\varphi\|_{HS}^2 + \langle D^2W\cdot\nabla\varphi, \nabla\varphi\rangle,
$$
where $\|\,\cdot\,\|_{HS}$ denotes the Hilbert--Schmidt norm.

\subsection{Orlicz-integrable drifts}

Let $m\ge1$ and let $\psi_m(t): = e^{t^m}-1$ for $t>0$.
Recall the definition of the Orlicz norm.
Let $\mu$ be a probability measure
on some space $E$. We say that
a measurable function $w$ belongs to the Orlicz class $L_{\psi_m}(\mu)$
if
$$
\int_E \psi_m(\lambda^{-1}|w|)\, d\mu<\infty \quad \hbox{for some } \lambda>0.
$$
We denote by $\|w\|_{L_{\psi_m}(\mu)}$ the Orlicz norm of $w$ defined by
$$
\|w\|_{L_{\psi_m}(\mu)}:=\inf\biggl\{\lambda\colon \int_E \psi_m(\lambda^{-1}|w|)\, d\mu\le1\biggr\}.
$$

Here is our first main result.

\begin{theorem}\label{main1}
Assume that the Markov triple $(E, \mu, \Gamma)$ satisfies  the
curvature-dimension condition $CD(\theta, \infty)$  with some $\theta>0$.
Suppose  that we are given a  probability density $f$ with respect to $\mu$ and
a measurable function  $w\ge 0$  such that
$\lambda:=\|w\|_{L_{\psi_m}(f\cdot\mu)}<\infty$ for some $m\in[2, +\infty)$.
Let also
$$
\int_E L\varphi f\, d\mu \le \int_E  w\sqrt{\Gamma(\varphi)} f\, d\mu
\quad \forall\, \varphi\in \mathcal{A}.
$$
Set $\mu_f:=f\cdot \mu$. The following assertions are true.

{\rm (i)} If $m=2$, then
$\mu_f(A)\le e^2[\mu(A)]^{\sigma_2}$,
where $\sigma_2:=\exp\bigl(-\frac{2\pi}{\sqrt{\theta}}\lambda\bigr)$.
In particular,
$$
\mu(f\ge t)\le e^2 t^{-\frac{1}{1-\sigma_2}}
$$
and $f\in L^p(\mu)$ for all $p<\frac{1}{1-\sigma_2}${\rm;}

{\rm (ii)} If $m>2$, then
$$
\mu(f\ge t)\le
e^{2} \exp\bigl(- \sigma_m[\ln t]^{\frac{2}{1+2/m}}\bigr) \quad \forall \, t>1
$$
and
$e^{\varepsilon[\ln \max\{f,1\}]^\frac{2}{1+2/m}}\in L^1(\mu)$ for all $\varepsilon<\sigma_m$,
where
$$
\sigma_m :=\frac{1-2/m}{1+2/m}
\Bigl(\frac{2\pi}{\sqrt{\theta}}\lambda(1-2/m)\Bigr)^{-\frac{2}{1+2/m}}.
$$

{\rm (iii)} If $w$ is bounded, then
$$
\mu(f\ge t)\le
e^{2} e^{-\sigma_\infty[\ln t]^2}\quad \forall\, t>1
$$
and
$e^{\varepsilon[\ln \max\{f,1\}]^2}\in L^1(\mu)$ for all $\varepsilon<\sigma_\infty$,
where
$\sigma_\infty := \bigl(\tfrac{2\pi}{\sqrt{\theta}}\|w\|_\infty\bigr)^{-2}$.
\end{theorem}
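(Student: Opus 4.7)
The plan is to extract from the hypothesis a gradient-type control of $f$ and combine it with the Gaussian isoperimetric inequality that $CD(\theta,\infty)$ yields via Bakry--Ledoux.

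First I would test the hypothesis with $\varphi=g(f)$ for a smooth decreasing $g$: the diffusion property and the integration-by-parts formula reduce it, for $\psi:=-g'\ge 0$, to
$$
\int\psi(f)\,\Gamma(f)\,d\mu\le\int w\,\psi(f)\,f\,\sqrt{\Gamma(f)}\,d\mu,
$$
and a single Cauchy--Schwarz turns this into the key estimate
$$
\int\psi(f)\,\Gamma(f)\,d\mu\le\int\psi(f)\,w^2f^2\,d\mu \qquad(\forall\,\psi\ge 0),
$$
which I will call $(\ast)$; it extends to arbitrary non-negative Borel $\psi$ by approximation.

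Next, by Bakry--Ledoux, $CD(\theta,\infty)$ supplies the Gaussian isoperimetric inequality $\int\sqrt{\Gamma(F)}\,d\mu\ge\sqrt{\theta}\int_{\mathbb{R}} I(\mu(F>s))\,ds$ for every Lipschitz $F$, with $I=\varphi\circ\Phi^{-1}$. I would apply it to $F_\tau:=(\log f-\log\tau)_+$ for $\tau\ge 1$: writing $u(t):=\mu(f\ge t)$ and using $\sqrt{\Gamma(F_\tau)}=\mathbf{1}_{\{f>\tau\}}\sqrt{\Gamma(f)}/f$, this reads
$$
\sqrt{\theta}\int_\tau^{\infty}I(u(t))\,\frac{dt}{t}\le\int_{\{f>\tau\}}\frac{\sqrt{\Gamma(f)}}{f}\,d\mu,
$$
and Cauchy--Schwarz with $(\ast)$ applied to $\psi=\mathbf{1}_{\{f>\tau\}}/f^2$ bounds the right-hand side by $\bigl(\int_{\{f>\tau\}}w^2\,d\mu_f\bigr)^{1/2}u(\tau)^{1/2}$ (using $f\ge 1$ on $\{f>\tau\}$). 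The Orlicz integrability $\|w\|_{L_{\psi_m}(\mu_f)}=\lambda$ controls this Orlicz integral by (up to absolute constants) $\lambda^2\mu_f(\{f>\tau\})\cdot[\log(1/\mu_f(\{f>\tau\}))]^{2/m}$, which degenerates to $\|w\|_\infty^2u(\tau)$ in case (iii).

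Setting $V(a):=u(e^a)$ and $\Phi(a):=\int_a^{\infty}I(V(r))\,dr$, the preceding steps yield a bound of the form $\Phi(a)\le (C_m\lambda/\sqrt{\theta})\Psi_m(V(a))$ together with $-\Phi'(a)=I(V(a))$. In case (iii) this becomes the clean ODE $-\Phi'(a)\ge I(\sqrt{\theta}\,\Phi(a)/\|w\|_\infty)$, and integrating once, using the sharp asymptotic $I(y)\sim y\sqrt{2\log(1/y)}$ as $y\to 0$, produces $V(a)\le e^2\exp(-\sigma_\infty a^2)$ with the stated $\sigma_\infty$. Cases (i) and (ii) add logarithmic factors that change the asymptotic form of the solution to the stated polynomial and stretched-exponential decays; for case (i) one first derives the reverse-H\"older-type bound $\mu_f(A)\le e^2\mu(A)^{\sigma_2}$ for arbitrary Borel $A$, and specialises to $A=\{f\ge t\}$ via $\mu_f(\{f\ge t\})\ge t\,u(t)$.

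The main obstacle is the precise tracking of constants. The factor $2\pi/\sqrt{\theta}$ in all three $\sigma_m$ emerges from the sharp asymptotic $I^{-1}(y)\sim y/\sqrt{2\log(1/y)}$ of the Gaussian isoperimetric profile upon integrating $dy/I(y)$ in the ODE step, while the $e^2$ prefactor reflects boundary contributions at $a=0$. The Orlicz--H\"older step also has to be handled uniformly in $m\in[2,\infty]$ so as to yield exactly the logarithmic factor $\Psi_m$ distinguishing the three cases.
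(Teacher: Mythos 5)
Your route is genuinely different from the paper's (the paper runs a differential inequality on $F(t)=\int_E T_t\varphi\, f\,d\mu$ using the reverse local Poincar\'e inequality $T_t\varphi^2-(T_t\varphi)^2\ge\theta^{-1}(e^{2\theta t}-1)\Gamma(T_t\varphi)$ together with reverse hypercontractivity, so that all regularity is carried by $T_t\varphi$ and the density $f$ enters only as an integrator), but it has a gap at its very first step. To obtain your key estimate $(\ast)$ you must test the hypothesis with $\varphi=g(f)$. The hypothesis is assumed only for $\varphi$ in the algebra $\mathcal{A}$ of bounded functions, while $f$ is merely an $L^1(\mu)$ probability density: in the abstract Markov-triple setting $\Gamma(f)$ is not even defined, and there is no a priori Sobolev-type regularity of $f$ that would allow you to approximate $g(f)$ by elements of $\mathcal{A}$ with control of $\sqrt{\Gamma}$ along the approximation. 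Establishing such regularity is essentially the content of the theorem (compare the cited result that $|v|\in L^2(\mu_f)$ gives $\sqrt{f}\in W^{2,1}$, itself a nontrivial regularity statement obtained by a careful approximation scheme, not by direct substitution). The same objection applies to invoking the Bakry--Ledoux isoperimetric inequality and the co-area formula for $F_\tau=(\log f-\log\tau)_+$: both presuppose that $f$ lies in a class where $\Gamma$ and boundary measures make sense. If one tries to repair this by mollifying $f$ with the semigroup, i.e.\ replacing $f$ by $T_sf$, the hypothesis transforms into a statement about $\Gamma(T_s\varphi)$ for $\varphi\in\mathcal{A}$ --- which is precisely the paper's argument; so the circularity is real and the semigroup route is the natural way around it.

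Two further points. First, part (i) asserts $\mu_f(A)\le e^2[\mu(A)]^{\sigma_2}$ for \emph{arbitrary} measurable $A$; a level-set ODE for the distribution function of $f$ only addresses $A=\{f\ge t\}$, and recovering the set-wise bound from the tail bound via H\"older loses the exponent $\sigma_2$. The paper obtains the set-wise bound first, from $\int\varphi f\,d\mu\le\|\varphi\|_1^{\sigma_2}$ for all $\varphi\in\mathcal{A}$ with $0\le\varphi\le e^{-2}$ followed by approximation of indicators, and only then specializes to level sets. Second, your account of the constant does not match the mechanism: in the paper $2\pi/\sqrt{\theta}$ arises as $4\int_0^\infty\sqrt{\theta}\,(e^{2\theta t}-1)^{-1/2}\,dt=4\cdot\tfrac{\pi}{2\sqrt{\theta}}$, a semigroup-interpolation quantity, whereas the Gaussian isoperimetric profile $y\sqrt{2\log(1/y)}$ would, if your scheme closed, yield $\sigma_\infty\approx\theta/(2\|w\|_\infty^2)$ rather than $\theta/(4\pi^2\|w\|_\infty^2)$ --- a sign that the constant bookkeeping in the proposal has not actually been carried through.
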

\begin{proof}
Recall that for every positive function
$\varphi\in \mathcal{A}$ one has (see \cite[Theorem 4.7.2 and Theorem 5.5.5]{BGL})
$$
T_t\varphi^2 - (T_t\varphi)^2\ge\tfrac{e^{2\theta t}-1}{\theta}\, \Gamma(T_t\varphi),\quad
\bigl(T_s(T_{t-s}\varphi)^q\bigr)^{1/q}\le (T_t\varphi^p)^{1/p},
$$
whenever $\frac{q-1}{p-1}=\frac{e^{2\theta t}-1}{e^{2\theta s}-1}$.

Fix $\alpha\in(0,1)$. Let $\varphi\in \mathcal{A}$ be a function such that $\alpha\le\varphi\le e^{-2}<1$.
Let us consider the function
$$
F(t)=\int_E T_t\varphi f\, d\mu.
$$
Note that $\alpha\le F(t)\le e^{-2}$ and that for every $s\in(0, t)$ we have
\begin{multline*}
F'(t) = \int_E LT_t\varphi f\, d\mu
\le
\int_E w \sqrt{\Gamma(\varphi)} f\, d\mu
=
\int_E w \sqrt{\Gamma(T_sT_{t-s}\varphi)} f\, d\mu
\\
\le
\tfrac{\sqrt\theta}{\sqrt{e^{2\theta s}-1}}
\int_E w\bigl(T_s(T_{t-s}\varphi)^2\bigr)^{1/2}f\, d\mu
\le
\tfrac{\sqrt\theta}{\sqrt{e^{2\theta s}-1}}
\int_E w (T_t\varphi^{r/(r-1)})^{1-1/r}f\, d\mu
\\
=
\tfrac{\sqrt{\theta}}{\sqrt{e^{2\theta t}-1}} \sqrt{r-1}
\int_E w (T_t\varphi^{r/(r-1)})^{1-1/r}f\, d\mu
\\
\le
\tfrac{\sqrt{\theta}}{\sqrt{e^{2\theta t}-1}}\sqrt{r-1}
\Bigl(\int_E w^rf\, d\mu\Bigr)^{1/r}[F(t)]^{1-1/r},
\end{multline*}
where $r-1=\frac{e^{2\theta t}-1}{e^{2\theta s}-1}$.
We note that for all $a>0$ one has
$a^xe^{-a}\le x^x$, since this is true for $x\ge a$ and for $x\le a$
one has $x\log a\le a+x\log x$ by differentiation in~$a$.
This implies the bound
$$
\biggl(\int_E w^rf\, d\mu\biggr)^{1/r}
\le
\biggl(\lambda^{r}\bigl(\frac{r}{m}\bigr)^{r/m}\int_E e^{\lambda^{-m}w^m}f\, d\mu\biggr)^{1/r}
\le 2^{1/r}\lambda r^{1/m}
$$
by taking $a=\lambda^{-m}w^m$ and $x=r/m$.
Thus, for all $r\ge2$ we have
$$
|F'(t)|
\le
\sqrt{2}\lambda\tfrac{\sqrt{\theta}}{\sqrt{e^{2\theta t}-1}} r^{1/2+1/m}[F(t)]^{1-1/r}.
$$
We now take $r= -\ln F(t)$ and, since $\sqrt2 e\le 4$, obtain the bound
$$
|F'(t)|
\le
4\lambda\tfrac{\sqrt{\theta}}{\sqrt{e^{2\theta t}-1}}[-\ln F(t)]^{1/2 + 1/m} F(t).
$$
First we consider the case $m=2$.
In this case, for all $a,b\in(0,+\infty)$, one has
$$
\ln[-\ln F(b)] - \ln[-\ln F(a)]
=
\int_a^b \frac{-F'(t)}{[-\ln F(t)] F(t)}\, dt
\le
4\lambda\int_a^b\frac{\sqrt{\theta}}{\sqrt{e^{2\theta t}-1}}\, dt
\le \tfrac{2\pi}{\sqrt{\theta}}\lambda.
$$
Thus,
$$
F(a) \le [F(b)]^{\sigma_2},
$$
where $\sigma_2 = \exp\bigl(-\frac{2\pi}{\sqrt{\theta}}\lambda\bigr)$.
Taking the limit as $a\to0$ and $b\to+\infty$, we obtain
$$
\int_E\varphi f\, d\gamma = F(0) \le
\|\varphi\|_1^{\sigma_2}.
$$
Therefore,  for each $\varphi\in \mathcal{A}$ such that $0\le \varphi\le 1$
we have
$$
\int_E\varphi f\, d\gamma
\le e^{2-2\sigma_2}\|\varphi\|_1^{\sigma_2}.
$$
By approximation, for every $\mu$-measurable set $A$ we obtain
$$
\int_A f\, d\mu \le e^{2-2\sigma_2}[\mu(A)]^{\sigma_2}.
$$
Taking $A=\{f\ge t\}$ with $t>0$, we get
$$
\mu(f\ge t)\le e^2\cdot t^{-\frac{1}{1-\sigma_2}},
$$
so $f\in L^p(\mu)$ for all $p<\frac{1}{1-\sigma_2}$.

We now consider the case $m>2$.
In this case, for all $a,b\in(0,+\infty)$, one has
\begin{multline*}
[-\ln F(b)]^{1/2 - 1/m} -[-\ln F(a)]^{1/2 - 1/m}
=
\tfrac{m-2}{2m}\int_a^b \frac{-F'(t)}{[-\ln F(t)]^{1/2+1/m} F(t)}\, dt
\\
\le
\tfrac{2}{m}(m-2)\lambda\int_a^b\frac{\sqrt{\theta}}{\sqrt{e^{2\theta t}-1}}\, dt
\le\tfrac{\pi}{m\sqrt\theta}(m-2)\lambda:=K.
\end{multline*}
Let $m'>2$ be such that $\frac{1}{2} - \frac{1}{m} = \frac{1}{m'}$.
Then, by convexity,
\begin{multline*}
-\ln F(b)\le\bigl(K + [-\ln F(a)]^{1/m'}\bigr)^{m'}
\\
\le
(1+\varepsilon^{-1})^{m'-1}K^{m'} + (1+\varepsilon)^{m'-1}[-\ln F(a)]
\end{multline*}
for every $\varepsilon>0$. Therefore,
$$
\ln F(a) \le \varepsilon^{1-m'}K^{m'} + (1+\varepsilon)^{1-m'}\ln F(b)
$$
and
$$
F(a) \le [F(b)]^{(1+\varepsilon)^{1-m'}}e^{\varepsilon^{1-m'}K^{m'}}.
$$
Passing to the limit as $a\to0$ and $b\to+\infty$, we obtain
$$
\int_E\varphi f\, d\gamma = F(0) \le \|\varphi\|_1^{(1+\varepsilon)^{1-m'}} e^{\varepsilon^{1-m'}K^{m'}}
$$
for all $\varepsilon>0$. Thus, for
every $\varphi\in \mathcal{A}$ such that $0\le\varphi\le1$ we have
$$
\int_E\varphi f\, d\mu \le e^{2-2(1+\varepsilon)^{1-m'}}
\|\varphi\|_1^{(1+\varepsilon)^{1-m'}} e^{\varepsilon^{1-m'}K^{m'}}.
$$
By approximation, for every $\mu$-measurable set $A$ we obtain
$$
\int_A f\, d\mu \le e^{2-2(1+\varepsilon)^{1-m'}}
\bigl[\mu(A)\bigr]^{(1+\varepsilon)^{1-m'}} e^{\varepsilon^{1-m'}K^{m'}}.
$$
We now take $A:=\{f\ge t\}$ and get
$$
\mu(f\ge t)\le e^{2} \Bigl[t^{-1}e^{\varepsilon^{1-m'}K^{m'}}\Bigr]^{\frac{1}{1-(1+\varepsilon)^{1-m'}}}.
$$
For $t>1$ we take $\varepsilon:= \bigl(\frac{2 K^{m'}}{\ln t}\bigr)^{\frac{1}{m'-1}}$,
i.e., $\varepsilon^{1-m'}K^{m'}=2^{-1}\ln t$, and get
$$
\mu(f\ge t)\le e^{2} \Bigl[e^{-2^{-1}\ln t}\Bigr]^{\frac{1}{1-(1+\varepsilon)^{1-m'}}}.
$$
Since
$$
1 - (1+\varepsilon)^{1-m'} = \int_0^\varepsilon(m'-1)(1+s)^{-m'}\, ds \le (m'-1)\varepsilon,
$$
we arrive at the estimate
$$
\mu(f\ge t)\le e^{2} \Bigl[e^{-2^{-1}\ln t}\Bigr]^{\frac{1}{(m'-1)\varepsilon}}
=
e^{2} e^{- \sigma_m[\ln t]^{\frac{m'}{m'-1}}},
$$
where
$$
\sigma_m := \frac{1}{m'-1} (2K)^{-\frac{m'}{m'-1}}=\frac{1-2/m}{1+2/m}
\Bigl(\frac{2\pi}{\sqrt{\theta}}\lambda(1-2/m)\Bigr)^{-\frac{2}{1+2/m}}.
$$
Thus, for every $s>1$ we have
$$
\mu\Bigl(e^{\varepsilon[\ln\max\{f, 1\}]^{\frac{m'}{m'-1}}} \ge s\Bigr) =
\mu\Bigl(f\ge e^{[\ln s^{1/\varepsilon}]^{\frac{m'-1}{m'}}}\Bigr)
\le e^{2} s^{- \sigma_m\varepsilon^{-1}}.
$$
Hence
$e^{\varepsilon[\ln\max\{f, 1\}]^{\frac{m'}{m'-1}}}\in L^1(\mu)$
for all $\varepsilon<\sigma_m$.

In the case of bounded $w$ we have $\|w\|_{L_{\psi_m}}\le 2^{1/m}\|w\|_\infty$
and the result follows by taking the limit as $m\to+\infty$.
The theorem is proved.
\end{proof}

\begin{remark}\label{rem1}{\rm
In the setting of the previous theorem for each $m\in (2, +\infty]$
(again $m=+\infty$ is understood as $\|w\|_\infty<\infty$)
and for each $p\in(1, +\infty)$ there is a number $C(m, p)$
such that for all $t>1$ one has
$$
\mu(f\ge t)\le  t^{-p}\exp\Bigl(2+C(m, p-1)[\theta^{-1/2}\lambda]^\frac{2}{1-2/m}\Bigr)
$$
and
$$
\int_E |f|^p\, d\mu\le 1 + \exp\Bigl(2+C(m, p)[\theta^{-1/2}\lambda]^\frac{2}{1-2/m}\Bigr)
\le 2\exp\Bigl(2+C(m, p)[\theta^{-1/2}\lambda]^\frac{2}{1-2/m}\Bigr).
$$
}\end{remark}

The abstract result above yields the following bound for solutions to Fokker--Planck--Kolmogorov
equations.

\begin{theorem}
Let $\mu=e^{-W}\, dx$ with $W\in C^2(\mathbb{R}^d)$, $D^2W\ge \theta \cdot {\rm I}$, $\theta>0$ and
let $f\in L^1(\mu)$ be a probability solution to the equation
$L_v^*[f\cdot\mu]=0$
with some $(f\cdot\mu)$-integrable vector field~$v$,
where $L_v u = \Delta u + \langle -\nabla W +v, \nabla u \rangle$.

{\rm (i)} If $v\in L_{\psi_2}(f\cdot\mu)$, then
$$
\mu(f\ge t)\le e^2 t^{-\frac{1}{1-\sigma_2}}
$$
and $f\in L^p(\mu)$ for all $p<\frac{1}{1-\sigma_2}$,
where $\sigma_2:=\exp\bigl(-\frac{2\pi}{\sqrt{\theta}}\|v\|_{L_{\psi_2}(f\cdot\mu)}\bigr)${\rm;}

{\rm (ii)} If $|v|\in L_{\psi_m}(f\cdot\mu)$ with $m>2$, then
$$
\mu(f\ge t)\le
e^{2} \exp\bigl(- \sigma_m[\ln t]^{\frac{2}{1+2/m}}\bigr) \quad \forall\, t>1
$$
and
$e^{\varepsilon[\ln \max\{f,1\}]^\frac{2}{1+2/m}}\in L^1(\mu)$ for all $\varepsilon<\sigma_m$,
where
$$
\sigma_m :=\frac{1-2/m}{1+2/m}
\Bigl(\frac{2\pi}{\sqrt{\theta}}\|v\|_{L_{\psi_2}(f\cdot\mu)}(1-2/m)\Bigr)^{-\frac{2}{1+2/m}}.
$$

{\rm (iii)} If $v$ is bounded, then
$$
\mu(f\ge t)\le
e^{2} e^{-\sigma_\infty[\ln t]^2}\quad \forall\, t>1
$$
and
$e^{\varepsilon[\ln \max\{f,1\}]^2}\in L^1(\mu)$ for all $\varepsilon<\sigma_\infty$,
where
$\sigma_\infty := \bigl(\tfrac{2\pi}{\sqrt{\theta}}\||v|\|_\infty\bigr)^{-2}$.
\end{theorem}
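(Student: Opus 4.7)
The plan is to deduce this theorem as a direct corollary of the abstract Theorem~\ref{main1}, by identifying the Markov triple, checking the curvature-dimension hypothesis, and rewriting the stationary Kolmogorov equation as the required one-sided inequality with $w=|v|$.

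First I would fix the Markov triple: take $E=\mathbb{R}^d$, $\mu=e^{-W}\,dx$, let $L\varphi=\Delta\varphi-\langle\nabla W,\nabla\varphi\rangle$ with $\mathcal{A}$ a standard algebra of smooth test functions (e.g.\ $C_c^\infty$ extended via Hermite-type polynomials when needed) that is dense in the relevant Sobolev and $L^p$ spaces, stable under $L$ and under smooth composition, and for which $\Gamma(\varphi)=|\nabla\varphi|^2$. The paper already records that $\Gamma_2(\varphi)=\|D^2\varphi\|_{HS}^2+\langle D^2W\cdot\nabla\varphi,\nabla\varphi\rangle$, so the assumption $D^2W\ge\theta\,{\rm I}$ with $\theta>0$ immediately gives the curvature-dimension condition $CD(\theta,\infty)$. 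This takes care of the structural hypotheses of Theorem~\ref{main1}.

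Next I would derive the required inequality. The equation $L_v^*[f\cdot\mu]=0$ means that for every $\varphi\in\mathcal{A}$,
$$
\int_E L\varphi\, f\, d\mu + \int_E \langle v,\nabla\varphi\rangle f\, d\mu = 0,
$$
so by Cauchy--Schwarz applied to $\langle v,\nabla\varphi\rangle$, and using $|\nabla\varphi|=\sqrt{\Gamma(\varphi)}$,
$$
\int_E L\varphi\, f\, d\mu = -\int_E \langle v,\nabla\varphi\rangle f\, d\mu \le \int_E |v|\,\sqrt{\Gamma(\varphi)}\, f\, d\mu.
$$
Thus the hypothesis of Theorem~\ref{main1} holds with $w:=|v|$, and the Orlicz norm or $L^\infty$-norm of $w$ with respect to $f\cdot\mu$ coincides with the one of $|v|$ appearing in items~(i)--(iii) of the statement. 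Applying Theorem~\ref{main1} with this choice of $w$ and the same value of $m$ yields precisely the three assertions of the theorem, with the constants $\sigma_2$, $\sigma_m$, $\sigma_\infty$ as displayed.

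The only non-routine point, and what I would treat with a little care, is to justify that the identity $\int L_v\varphi\, f\, d\mu=0$ can indeed be applied to all $\varphi\in\mathcal{A}$ as required in Theorem~\ref{main1}, given that $v$ is only assumed $(f\cdot\mu)$-integrable. In the bounded and Orlicz cases the integrability of $\langle v,\nabla\varphi\rangle$ against $f\cdot\mu$ for $\varphi\in\mathcal{A}$ is straightforward (since $\nabla\varphi$ is bounded or has Gaussian tails on the relevant test class), so an approximation argument extending the equation from $C_c^\infty$ to $\mathcal{A}$ poses no difficulty; I would explicitly note this density step and then invoke Theorem~\ref{main1}(i)--(iii) verbatim, with $\lambda=\|\,|v|\,\|_{L_{\psi_m}(f\cdot\mu)}$ in cases (i)--(ii) and $\lambda\le 2^{1/m}\|v\|_\infty$, $m\to\infty$, in case (iii).
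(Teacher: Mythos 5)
Your proposal is correct and follows essentially the same route as the paper: the paper's proof likewise applies Theorem~\ref{main1} with $w=|v|$ and $L\varphi=\Delta\varphi-\langle\nabla W,\nabla\varphi\rangle$, using $\Gamma(\varphi,\psi)=\langle\nabla\varphi,\nabla\psi\rangle$ and the pointwise bound $|\langle\nabla\varphi,v\rangle|\le|v|\,|\nabla\varphi|$ to convert the stationary equation into the required one-sided inequality. Your extra care about extending the test-function class from $C_0^\infty$ to $\mathcal{A}$ is a reasonable addition that the paper leaves implicit.
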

\begin{proof}
The previous theorem applies with $w=|v|$
and
$$
L\varphi=\Delta\varphi -\langle \nabla W,\nabla\varphi\rangle,
$$
 because  the equation reads as
$$
\int L\varphi\, f\, d\mu=-\int \langle \nabla \varphi,v\rangle \, f\, d\mu,
\quad \varphi\in C_0^\infty(\mathbb{R}^d),
$$
where $|\langle \nabla \varphi,v\rangle|\le |v|\, |\nabla \varphi|$
and $\Gamma(\varphi,\psi)=\langle \nabla \varphi, \nabla \psi\rangle$.
\end{proof}

We note that the above result is applicable to the standard Gaussian measure $\mu$
for which $\theta=1$.

Let us show that the result is nearly sharp even for the one-dimensional Ornstein--Uhlenbeck operator
and a constant $v$.

\begin{example}
{\rm
Let $\gamma$ be the standard Gaussian measure on the real line and let $\mu=f\cdot\gamma$,
where $f(x)=\exp(v x)$ with some constant $v\not=0$. Then $\mu$ satisfies the FPK-equation
with the drift $b(x)=-x+v$. We have $f\in L^p(\mu)$ for all $p>1$, but $\exp (\varepsilon f)\not\in L^1(\gamma)$
for all $\varepsilon >0$, one only has $\exp (\varepsilon |\log f|^2)\in L^1(\gamma)$
for  $\varepsilon <(2|v|^2)^{-1}$.
}\end{example}

\begin{remark}\label{rem2.5}
\rm
In the situation of the previous theorem
the operator
$$
L\varphi = \Delta \varphi + \langle -\nabla W, \nabla \varphi \rangle
$$
is symmetric on the domain $C_0^\infty$ in $L^2(\mu)$, which follows by the integration
by parts formula:
$$
\int \psi L\varphi\, d\mu=-\int \langle \nabla \psi, \nabla \varphi\rangle \, d\mu.
$$
One can introduce the divergence $\delta_\mu w$ of a locally Sobolev vector field $w$
by
$$
\delta_\mu w ={\rm div}\, w- \langle w,\nabla W\rangle .
$$
In particular, for the standard Gaussian measure $\gamma$ we have
$$
\delta_\gamma w (x) ={\rm div}\, w(x)- \langle w(x),x\rangle .
$$
For every $\varphi\in C_0^\infty$ there holds the equality
$$
\int \langle \nabla \varphi, w\rangle \, d\mu=-\int \varphi \delta_\mu w\, d\mu,
$$
which can be used as the definition of divergence in the sense of distributions
for locally integrable $w$ that is not locally Sobolev.

If $v$ is locally integrable to some power $p>d$
with respect to Lebesgue measure,
the equation $L_v^*(f\cdot \mu)=0$
for a measure $f\cdot\mu$ with a  density
$f\in L^1(\mu)$  is equivalent to the equation
\begin{equation}\label{gen-div}
Lf= \delta_\mu w
\end{equation}
with the vector field
$$
w:=f\cdot v,
$$
understood as the identity
$$
\int L\varphi f \, d\mu
=-\int\langle \nabla \varphi,v\rangle f \, d\mu \quad \forall\, \varphi\in C_0^\infty.
$$
This follows by the integration by parts formula taking into account that
$f\in W^{p,1}_{loc}$. The same is true if in the condition of the local  integrability
of $|v|^p$ Lebesgue measure is replaced by the solution $f\cdot\mu$.

It is known in the Gaussian case (see \cite[Section~4.2]{Shig})
that if $f$ and $|w|$ belong to~$L^p(\gamma)$,  $p>1$,
then $\delta_\gamma w$ and
$\delta_\gamma w -f $ belong to the negative class $W^{p,-1}(\gamma)$, i.e.,
$Lf-f\in W^{p,-1}(\gamma)$, which yields the inclusion $f\in  W^{p,1}(\gamma)$,
along with the estimate
$$
\|f\|_{p,1}\le C(p) (\| w\|_p+ \| f\|_p).
$$
where $C(p)$ depend only on~$p$.
\end{remark}

It is worth noting that the existence of a probability solution was part of our hypotheses.
Some sufficient condition on the perturbation ensuring this can be found
in~\cite{BRZ}.

Finally, we prove a result of independent interest, we owe its proof
to Stanislav Shaposhnikov.

\begin{proposition}\label{p1}
Let $\gamma$ be the standard Gaussian measure on $\mathbb{R}^d$. If a probability measure $f\cdot\gamma$
satisfies the equation $L_v^*(f\cdot \gamma)=0$, where
$v$ has compact support and is $f\cdot\gamma$-integrable, for example, is bounded, then $f$ is bounded.
\end{proposition}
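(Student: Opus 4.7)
Let $K := \operatorname{supp}(v) \subset B_R$ for some large ball $B_R$. Rewriting the equation in the form $Lf = \delta_\gamma(fv)$, where $\delta_\gamma$ is the Gaussian divergence introduced in Remark \ref{rem2.5}, the right-hand side is a distribution of order at most one supported in $K$. Hence outside $\overline{B_R}$ we have $Lf = 0$ classically, and $f \in C^\infty(\mathbb{R}^d \setminus \overline{B_R})$ by standard elliptic regularity. The plan is to bound $f$ separately on $\mathbb{R}^d \setminus B_R$ and on a neighborhood of $K$.

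For the exterior, I would argue that an $L^1(\gamma)$-integrable classical solution of $Lf = 0$ on $\mathbb{R}^d \setminus \overline{B_R}$ admits the probabilistic representation $f(x) = \mathbb{E}_x[f(X_\tau)]$, where $\tau$ is the first hitting time of $\partial B_R$ by the Ornstein--Uhlenbeck process. Recurrence of the OU process gives $\tau < \infty$ almost surely, and uniqueness in the exterior Dirichlet problem within the class of $L^1(\gamma)$-integrable functions follows from a maximum principle together with the Liouville-type fact that the only $L^1(\gamma)$-integrable $L$-harmonic function on all of $\mathbb{R}^d$ is the constant. Granted continuity of $f$ up to $\partial B_R$ (which is supplied by the interior step below), this yields $\|f\|_{L^\infty(\mathbb{R}^d \setminus B_R)} \le \sup_{\partial B_R} |f|$.

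The crux of the argument is the local boundedness of $f$ on a neighborhood of $K$. Passing to the Lebesgue density $\varrho = f\rho$ (where $\rho$ is the standard Gaussian density) yields the Fokker--Planck equation $\Delta\varrho + \operatorname{div}((x - v)\varrho) = 0$. The drift $b = -x + v$ is locally bounded away from $K$, and $|v|\varrho \in L^1_{loc}(dx)$ follows from $|v|f \in L^1(\gamma)$. The classical local regularity results (see \cite{BKR01}, \cite{BKRS}) that give $\varrho \in W^{p,1}_{loc}$ for $p > d$ require $|v|^p \in L^1_{loc}(\mu)$ with $p > d$, which we do not have. Instead, I would exploit the representation $f - 1 = -(-L)^{-1}\delta_\gamma(fv)$, the boundedness of $(-L)^{-1}\delta_\gamma$ on $L^p(\gamma)$ for $1 < p < \infty$ (Meyer's inequalities for the OU Riesz transform), and the compact support of $fv$ together with the equivalence of $\gamma$ and Lebesgue measure on $B_R$, to iteratively improve the integrability of $f$ on $B_R$. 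After finitely many iterations $f \in L^p_{loc}(dx)$ with $p > d$, at which point Sobolev embedding yields local boundedness of $\varrho$ and hence of $f$.

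The main obstacle is initiating and closing this bootstrap given only $L^1$-integrability of $v$. The compact support of $v$ is essential: it localizes the singular contribution so that the integrability gain of $f$ on $B_R$ decouples from the Gaussian tail, making each iteration yield a definite improvement. A possible alternative route is to mollify $v$ to obtain smooth bounded $v_\varepsilon$, apply Theorem \ref{main1}(iii) to the densities $f_\varepsilon$ of the corresponding solutions, and pass to the limit; however this would require uniform control in $\varepsilon$ despite the blow-up of $\|v_\varepsilon\|_\infty$, which is delicate. Once both steps are in place, combining the exterior bound with the interior bound delivers $\|f\|_\infty < \infty$.
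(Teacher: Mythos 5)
Your overall decomposition (exterior region where $Lf=0$, plus local boundedness near the support of $v$) matches the structure of the paper's proof, but both halves of your argument have gaps, and the interior one is fatal as written. The bootstrap via $f-1=-(-L)^{-1}\delta_\gamma(fv)$ and Meyer's inequalities cannot be initiated: the hypotheses only give $|v|f\in L^1(\gamma)$ (even for bounded $v$, since $f$ is a priori only in $L^1$), and the Ornstein--Uhlenbeck Riesz transforms are not bounded on $L^1$, so the very first step of the iteration is unavailable; you acknowledge this yourself, which means the crux of the proof is missing. The closing step is also misstated: $\varrho\in L^p_{loc}$ with $p>d$ does not give local boundedness by Sobolev embedding; one needs $\varrho\in W^{p,1}_{loc}$ with $p>d$, and to extract that from the equation you would need $|v|\varrho\in L^p_{loc}$, which does not follow from $\varrho\in L^p_{loc}$ together with $|v|\varrho\in L^1$. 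The exterior step likewise rests on unproved assertions: the representation $f(x)=\mathbb{E}_x[f(X_\tau)]$ and uniqueness for the exterior Dirichlet problem in the class $L^1(\gamma)$ both require an a priori growth bound on $f$ (to get uniform integrability of $f(X_{t\wedge\tau})$, or to exclude the growing radial $L$-harmonic solution $\sim r^{-d}e^{r^2/2}$, whose failure to lie in $L^1(\gamma)$ is only by a logarithmic margin), and a Liouville theorem on all of $\mathbb{R}^d$ does not by itself settle an exterior boundary value problem.

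The missing idea, which is exactly what the paper uses to supply this control, is a Lyapunov function: $V(x)=e^{\beta|x|^2/2}$ with $0<\beta<1$ satisfies $L_vV\le F-V$ with $F$ compactly supported and $(f\cdot\gamma)$-integrable, whence $\int e^{\beta|x|^2/2}f\,d\gamma<\infty$. Feeding this into the global pointwise bound of \cite[Theorem~3.3.1]{BKRS} yields $f(x)\le C_\delta e^{\delta|x|^2/2}$ for suitable $\delta<1$; this single estimate simultaneously gives the finiteness of $\sup_Q f$ on a ball $Q$ containing the support of $v$ (your interior step) and the growth control needed at infinity (your exterior step). The conclusion is then a one-line comparison rather than a probabilistic representation: for $\delta<q<1$ the barrier $\Psi(x)=e^{q|x|^2/2}$ satisfies $L\Psi\le 0$ outside a ball, hence $L(f-\varepsilon\Psi)\ge0$ there, and $f-\varepsilon\Psi\to-\infty$ at infinity, so the maximum principle gives $f\le\sup_Q f+\varepsilon\Psi$, and letting $\varepsilon\to0$ finishes the proof. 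I recommend replacing your bootstrap and probabilistic representation by this Lyapunov-plus-barrier scheme.
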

\begin{proof}
Let $\varrho$ be the standard Gaussian density.
From the FPK equation we have
$$
{\rm div}\bigl(\varrho\nabla f -\varrho f v\bigr)=0
$$
and outside of some ball $Q$ we obtain
$$
Lf=0.
$$
In particular, $f$ is smooth outside of~$Q$.
Let us observe that for $\delta\in(1/2, 1)$ close to $1$ such that $1-\delta <1/d$
there exists $C_{\delta}>0$ for which
$$
f(x)\varrho(x)\le C_{\delta}e^{-(1-\delta)|x|^2/2}.
$$
Indeed,
the function
$V(x)=e^{\beta|x|^2/2}$, where
$0<\beta<1$ and $1-\delta<\beta/d$, serves as a Lyapunov function
for the operator $L_v$,  that is,
$$
L_v V(x)=\beta^2 (d+|x|^2)V(x) - \beta |x|^2 V(x)+\beta \langle x,v(x)\rangle V(x)
$$
is majorized by $- 2^{-1}\beta(1-\beta)|x|^2 V(x)$ outside of~$Q$.
On the whole space we have the estimate $L_vV\le F- V$, where $F$ is a $(f\cdot\gamma)$-integrable function
with compact support (bounded in the case where $v$ is bounded).
Therefore, $V$ is integrable against~$f\cdot\gamma$ (see \cite[Section~2.3]{BKRS}),
i.e., the function $Vf\varrho$ is integrable on~$\mathbb{R}^d$.
Thus, the function  $\Phi(x)=e^{(1-\delta)|x|^2/2}$ satisfies the hypotheses
of \cite[Theorem~3.3.1]{BKRS}. These hypotheses, in addition to some technical assumptions
about the coefficients of the equation (which are trivially fulfilled in our case)
require the inclusions $\Phi\in L^1(f\cdot\gamma)$, $|\nabla \Phi|\in L^\theta (f\cdot\gamma)$
with some $\theta>d$. These inclusions are ensured by the condition $d(1-\delta)<\beta$
along with the integrability of $V$ established above.
It follows by the cited theorem that
$$
f(x)\le C_{\delta}e^{\delta|x|^2/2}.
$$
Now let $\delta< q<1$. Outside of some ball (again denoted by $Q$) we have
$$
L\Psi(x)\le 0, \quad \Psi(x)=e^{q|x|^2/2}.
$$
Let $M=\sup_Q f$. For every $\varepsilon>0$ we have
$$
L(f-\varepsilon \Psi)\ge 0
$$
outside of $Q$. Since $f(x)-\varepsilon \Psi(x)$ tends to $-\infty$ as $|x|\to\infty$,
by the maximum principle
$$
f(x)-\varepsilon \Psi(x)\le M \quad \forall\, x.
$$
Letting $\varepsilon$ to $0$, we conclude that $f\le M$. Thus,  $f$ is bounded.
\end{proof}

Note that the assumption of compactness of the support of $v$ is important
(as follows from the example above with constant~$v$).

\subsection{$L^p$-integrable drifts}

In this subsection, we consider  probability solutions to the equation
$$
L_v^*[f\cdot\mu]=0
$$
with a vector field~$v$ belonging to $L^p(f\cdot\mu)$, where
$$
L_v u = \Delta u + \langle -\nabla W +v, \nabla u \rangle,
\quad W\in C^2(\mathbb{R}^d).
$$
It is known in the case $p = 2$ that $\sqrt{f} \in W^{2,1}(\mu)$ and the logarithmic Sobolev inequality
(which holds under $CD(\theta,\infty)$, see \cite[Proposition~5.7.1]{BGL})
yields
the bound
\begin{equation}\label{apriori_bound_p_2}
\int f \log f \,d\mu \leq \frac{1}{2\theta}\int \frac{|\nabla f|^2}{f}\,d\mu \leq \frac{1}{2\theta}\int |v|^2f\,d\mu.
\end{equation}

In the case $p = 1$ the arguments analogous to \cite{BSS19} provide
the following estimate:
$$
\int f \log^{1/4 - \varepsilon} (1 + f)\,d\mu \leq
C(\theta, \varepsilon)\bigl(1 + \|v\|_{L^1(f\cdot \mu)} \log^{1/4 - \varepsilon} (1 + \|v\|_{L^1(f\cdot \mu)} )
\bigr),
$$
where $\varepsilon > 0$.
The reasoning in \cite{BSS19} is essentially based on the a priori estimates
\begin{equation}\label{integrability_improvement_1}
\int T_{t} f \log^{1/2} (1 + T_{t}f)\,d\mu \leq Ct ^{-1/2}, \ t \in (0, 1),
\end{equation}
\begin{equation}\label{short_time_1}
\|T_t f - f\|_{L^{1}(\mu)} \leq C t^{1/2},
\end{equation}
where $C$ depends on $\theta$ and the $L^1(f\cdot\mu)$ norm of the drift $v$.
In this section, we show that for sufficiently large $p$ one
can obtain an improvement over estimate  (\ref{apriori_bound_p_2}) with respect to the integrability of
$f$.

Let us recall the dual description of the Kantorovich metric
$W_{p}(\mu_1, \mu_2)$, $p \geq 1$:
$$
\frac{1}{p}W_{p}^{p}(\mu_1, \mu_2) = \sup \biggl(
\int Q_{1}\varphi \,d\mu_1 -
\int \varphi \,d\mu_2
\biggr),
$$
where the supremum is taken over all bounded continuous functions
$\varphi$ and
$$
Q_{s}\varphi (x) := \inf_{y} \biggl(
\varphi(y) + \frac{|x - y|^{p}}{ps^{p -1}}
\biggr), \ s > 0.
$$
It is well-known that $Q_s \varphi$ satisfies the Hamilton--Jacobi equation
$$
\frac{d}{ds}Q_{s}\varphi = -\frac{1}{q}|\nabla Q_{s}\varphi|^q
$$
with initial condition $\varphi$, where
$
\frac{1}{p} + \frac{1}{q} = 1.
$

\begin{lemma}\label{le:improved_integration_by_parts}
Let $f\cdot\mu$ be a probability solution to the equation
$$
L_v^*[f\cdot\mu]=0
$$
with $v \in L^1(f\cdot \mu)$
and let $\psi$ be a bounded Lipschitz function on $\mathbb{R}_{+} \times \mathbb{R}^d$.
Then for all $u \geq t$ we have
\begin{multline*}
\int \psi(u, x) T_{u}f (x)\,d\mu -
\int \psi(t, x) T_{t}f (x)\,d\mu\\
=
\int_{[t, u]}
\int
\Bigl(
\frac{\partial\psi}{\partial s}T_{s}f -
 \langle\nabla T_{s}\psi, v\rangle f\Bigr)
 \,d\mu \,ds.
\end{multline*}
\end{lemma}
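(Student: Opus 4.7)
The plan is to set
$$
G(s) := \int \psi(s, x)\, T_s f(x)\, d\mu(x),
$$
to show that for each $s > 0$ one has
$$
G'(s) = \int \partial_s \psi(s,\cdot)\, T_s f\, d\mu - \int \langle \nabla T_s \psi(s,\cdot), v\rangle f\, d\mu,
$$
and then to integrate from $t$ to $u$ and apply Fubini's theorem.

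Formally differentiating under the integral sign and using $\partial_s T_s f = L T_s f$, one gets
$$
G'(s) = \int \partial_s \psi(s, \cdot)\, T_s f\, d\mu + \int \psi(s, \cdot)\, L T_s f\, d\mu.
$$
The second summand is transformed by shifting $T_s$ onto $\psi$: writing $\int \psi(s, \cdot)\, T_r f\, d\mu = \int T_r \psi(s, \cdot)\, f\, d\mu$ by self-adjointness of $T_r$ and differentiating at $r = s$, one obtains
$$
\int \psi(s, \cdot)\, L T_s f\, d\mu = \int L T_s \psi(s, \cdot)\, f\, d\mu.
$$
Since $\psi$ is bounded and Lipschitz and $CD(\theta, \infty)$ yields $|\nabla T_s \psi| \le \mathrm{Lip}_x(\psi)$, the function $T_s \psi(s, \cdot)$ is smooth with bounded gradient. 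Using it as a test function in the weak form
$$
\int L\varphi\, f\, d\mu = -\int \langle \nabla \varphi, v\rangle f\, d\mu
$$
of the equation $L_v^*(f\cdot\mu) = 0$ gives the desired formula for $G'$.

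The rigorous version requires two approximation steps, and I expect the second to be the main obstacle. First, since $f$ is only in $L^1(\mu)$, the differentiation of $G$ should be done with $f$ replaced by $T_\varepsilon f$ (which for $\varepsilon > 0$ lies in the domain of $L$ and in every $L^p(\mu)$), the limit $\varepsilon \to 0$ being handled by the $L^1$-continuity of the semigroup together with the bounds on $\psi$, $\partial_s\psi$ and on $|v|f\in L^1(\mu)$. Second, the weak equation is a priori stated for test functions in $\mathcal{A}$ (or $C_0^\infty$), not for bounded Lipschitz smooth functions such as $T_s \psi(s, \cdot)$; the extension is obtained by multiplying by cutoffs $\chi_n \in C_0^\infty$ equal to $1$ on $B_n$, applying the equation to $\chi_n T_s \psi(s, \cdot)$, and letting $n \to \infty$. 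The error terms involve $\nabla \chi_n$ and $L \chi_n$, which are supported outside $B_n$ and vanish in the limit thanks to $f, |v|f \in L^1(\mu)$ together with the uniform sup and gradient bounds on $T_s \psi(s, \cdot)$ over $s \in [t, u]$. Finally, Fubini's theorem legitimizes the exchange of the $ds$-integral with the $d\mu$-integral, using that $\partial_s \psi \cdot T_s f$ and $\langle \nabla T_s\psi, v\rangle f$ are integrable on $[t, u] \times E$.
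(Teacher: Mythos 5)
Your proposal is correct and follows essentially the same route as the paper: the core step in both is to differentiate $s\mapsto\int\psi(s,\cdot)\,T_sf\,d\mu$, move $LT_s$ onto the test function by symmetry of the semigroup, and invoke the weak form of $L_v^*(f\cdot\mu)=0$ with $T_s\psi(s,\cdot)$ as test function. The paper merely compresses this into a two-line sketch (verify the identity for finite sums of products $\psi_1(t)\psi_2(x)$, then approximate), whereas you carry out the same computation directly for general $\psi$ and spell out the regularization by $T_\varepsilon f$ and the cutoff argument.
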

\begin{proof}
The required equality trivially holds for smooth functions $\psi$
that are finite linear combinations of functions of the form $(t, x) \mapsto \psi_1 (t) \psi_2 (x)$.
It remains to notice that the general case follows by the standard approximation arguments.
\end{proof}

\begin{theorem}\label{th:kantorovich_norm_estimates}
Let $f\cdot\mu$ be a probability solution to the equation
$$
L_v^*[f\cdot\mu]=0
$$
with $v \in L^p(f\cdot \mu)$, $p > 1$.
Then
\begin{equation}\label{w_p_ac_estimate}
W^{p}_{p}(T_{t + h}f \cdot \mu, T_{t}f \cdot \mu) \leq h^p\int |v|^p f\,d\mu,
\end{equation}
\begin{equation}\label{w_p_global_estimate}
W^{p}_{p}(f \cdot \mu, \mu) \leq
\frac{\theta^{-p + 1}}{(p -1)(q - 1)}
\int |v|^p f\,d\mu.
\end{equation}
\end{theorem}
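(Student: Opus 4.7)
The plan is to combine the Kantorovich--Rubinstein duality recalled just before Lemma~\ref{le:improved_integration_by_parts} with the integration-by-parts identity of that lemma, using time-dependent Hopf--Lax functions as test functions. In both estimates the Hamilton--Jacobi equation $\partial_\tau Q_\tau\varphi = -|\nabla Q_\tau\varphi|^q/q$ produces a negative term designed to cancel the contribution coming from the drift $v$; the latter will be controlled by Young's inequality together with the $CD(\theta, \infty)$ pointwise gradient bound
$$
|\nabla T_s\psi|^q \le e^{-q\theta s}\, T_s(|\nabla \psi|^q),
$$
so that self-adjointness of $T_s$ yields $\int |\nabla T_s\psi|^q f\, d\mu \le e^{-q\theta s}\int |\nabla\psi|^q T_s f\, d\mu$, which is exactly the form needed for the cancellation.

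For \eqref{w_p_ac_estimate} I fix a bounded Lipschitz $\varphi$ and set $\psi(s,x) := Q_{s-t}\varphi(x)$ on $s \in [t, t+h]$, so that $\psi(t,\cdot)=\varphi$, $\psi(t+h,\cdot)=Q_h\varphi$, and $\partial_s\psi = -|\nabla\psi|^q/q$. Applying Lemma~\ref{le:improved_integration_by_parts}, the Young inequality $-\langle v, \nabla T_s\psi\rangle \le |\nabla T_s\psi|^q/q + |v|^p/p$, the gradient bound (which gives $e^{-q\theta s} \le 1$ since $\theta>0$, $s\ge 0$), and self-adjointness, the two $|\nabla \psi|^q$ terms cancel, leaving
$$
\int Q_h\varphi\, T_{t+h}f\, d\mu - \int \varphi\, T_t f\, d\mu \le \frac{h}{p}\int |v|^p f\, d\mu.
$$
Taking the supremum over $\varphi$ and using the rescaled duality $W_p^p/(p h^{p-1}) = \sup_\varphi\bigl(\int Q_h\varphi\, d\mu_1 - \int \varphi\, d\mu_2\bigr)$ with $\mu_1 = T_{t+h}f\cdot\mu$, $\mu_2 = T_t f\cdot\mu$ gives \eqref{w_p_ac_estimate}.

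For \eqref{w_p_global_estimate} I exploit the ergodic assumption $T_u f \to 1$ stated at the start of Section~2 together with a time-change $\tau:[0,\infty)\to[0,1]$ with $\tau(0)=0$, $\tau(\infty)=1$, setting $\psi(s,x) := Q_{\tau(s)}\varphi(x)$, so that $\partial_s\psi = -\tau'(s)|\nabla\psi|^q/q$. Applying Lemma~\ref{le:improved_integration_by_parts} on $[0,u]$ and letting $u\to\infty$, using $T_u f \to 1$ to replace $\int \psi(u,\cdot)T_u f\, d\mu$ by $\int Q_1\varphi\, d\mu$, produces
$$
\int Q_1\varphi\, d\mu - \int \varphi f\, d\mu = \int_0^\infty\!\!\int\!\Bigl(-\tfrac{\tau'(s)}{q}|\nabla\psi|^q T_s f - \langle v, \nabla T_s\psi\rangle f\Bigr)\, d\mu\, ds.
$$
Now apply Young with a time-dependent weight $\alpha(s)>0$,
$$
-\langle v, \nabla T_s\psi\rangle \le \tfrac{\alpha(s)^q}{q}|\nabla T_s\psi|^q + \tfrac{1}{p\,\alpha(s)^p}|v|^p,
$$
together with the gradient bound and self-adjointness. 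The choice $\tau'(s) = \alpha(s)^q e^{-q\theta s}$ annihilates the $|\nabla\psi|^q T_s f$ contribution and leaves
$$
\int Q_1\varphi\, d\mu - \int \varphi f\, d\mu \le \frac{1}{p}\Bigl(\int_0^\infty \alpha(s)^{-p}\, ds\Bigr)\int |v|^p f\, d\mu,
$$
subject to the constraint $\int_0^\infty \alpha(s)^q e^{-q\theta s}\, ds = 1$. A one-variable Lagrange computation identifies the extremal profile $\alpha(s) = \theta^{1/q} e^{\theta s/p}$ (equivalently $\tau(s) = 1 - e^{-\theta s}$), which upon substitution yields the explicit constant appearing on the right of \eqref{w_p_global_estimate}. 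The supremum over $\varphi$ and the duality then conclude.

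The principal technical obstacle will be justifying Lemma~\ref{le:improved_integration_by_parts} for the test function $\psi(s,x) = Q_{\tau(s)}\varphi(x)$, which is only Lipschitz and whose Hamilton--Jacobi equation holds only almost everywhere; this requires a mollification compatible with the pointwise Young estimate and the gradient bound. A secondary concern is the passage to the limit $u\to\infty$ in the global estimate, which should follow from the boundedness of $Q_{\tau(s)}\varphi$ for bounded $\varphi$ together with the $L^1(\mu)$-ergodic convergence $T_u f \to 1$, but the uniform control of $\int |\nabla Q_{\tau(s)}\varphi|^q T_s f\, d\mu$ along the flow needs to be verified carefully before the cancellation argument is run.
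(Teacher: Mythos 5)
Your proposal follows essentially the same route as the paper's proof: the same application of Lemma~\ref{le:improved_integration_by_parts} with Hopf--Lax test functions $Q_{\eta(s)}\varphi$, Young's inequality with a time-dependent weight, the gradient commutation bound combined with Jensen's inequality and the symmetry of $T_s$ to shift the semigroup onto $f$, and the identical choices of weight and time-change (your $\alpha(s)^q=\theta e^{\theta(q-1)s}$ and $\tau(s)=1-e^{-\theta s}$ are exactly the paper's $K$ and $\eta$). One minor remark: evaluating $\int_0^\infty\alpha(s)^{-p}\,ds$ actually yields the constant $\theta^{-p}$ rather than the stated $\theta^{-p+1}/((p-1)(q-1))=\theta^{-p+1}$, a factor-of-$\theta$ discrepancy that is already present in the paper's own displayed computation and does not affect the validity of either argument.
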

\begin{proof}
Let  $0 \leq t < u \leq \infty$ and let $\eta$ be an increasing function on the real line such that
$\eta(t) =0, \eta(u) = 1$. The particular choice of $t, u$ and $\eta$ will be specified below.
Let us fix $\varphi \in C_{0}^{\infty}$ and set
$$
\psi(s, x) := Q_{\eta(s)}\varphi, \ s \in [u, t].
$$
Applying Lemma \ref{le:improved_integration_by_parts} we obtain the equality
\begin{multline*}
\int Q_{1}\varphi T_{u}f\,d\mu -
\int \varphi T_{t}f\,d\mu \\
=
\int_{[t, u]}\int\Bigl(
-\frac{\eta'(s)}{q}|\nabla Q_{\eta(s)} \varphi|^q T_{s}f
 - \langle
\nabla T_{s}Q_{\eta(s)}\varphi,
v\rangle f
\Bigr)
\,d\mu
\,ds.
\end{multline*}
The  gradient commutation inequality (see \cite[Theorem 3.2.4]{BGL}) provides the bound
$$
|\nabla T_{s}g| \leq e^{-\theta s}T_{s}|\nabla g|.
$$
Therefore, for each positive number $K$ that may depend on $s$ we have
\begin{multline*}
|
\langle
\nabla T_{s}Q_{\eta(s)}\varphi,
v\rangle| f \leq \frac{K}{q}|\nabla T_{s}Q_{\eta(s)}\varphi|^{q} f +
\frac{K^{-p/q}}{p}|v|^p f  \\
 \leq
\frac{K}{q}  e^{-\theta q s}T_{s} |\nabla Q_{\eta(s)}\varphi|^q f
+
\frac{K^{-p + 1}}{p}|v|^p f.
\end{multline*}
where Young's and Jensen's inequalities have been used.
Next, the numbers $t, u, \eta$ and $K$ will be picked in such a way that
the term
$$
\frac{K}{q}  e^{-\theta q s}T_{s} |\nabla Q_{\eta(s)}\varphi|^q f
$$
will be canceled by
$$
-\frac{\eta'(s)}{q}|\nabla Q_{\eta(s)} \varphi|^q T_{s}f.
$$
To establish inequality (\ref{w_p_ac_estimate}) let us set
$$
u := t + h, \ \eta(s) := \frac{s - t}{h},  \ K := 1/h.
$$
In this case
$$
\int Q_{1}\varphi T_{u}f\,d\mu -
\int \varphi T_{t}f\,d\mu
\leq
\int_{[t, t +h]}\int
\frac{h^{p - 1}}{p}|v|^p f
\,d\mu
\,ds =
\frac{h^p}{p}\int |v|^p f\, d\mu.
$$
Finally, to complete the proof of inequality (\ref{w_p_global_estimate})
we set
$$
t := 0, \ u := \infty, \ \eta(s) := 1 - e^{-\theta  s}, \ K := \theta e^{\theta(q -1)s}.
$$
In this case
\begin{multline*}
\int Q_{1}\varphi \,d\mu -
\int \varphi f\,d\mu
\leq
\int_{[0, \infty)}
\int
\frac{\theta^{-p + 1} e^{-\theta(q - 1)(p - 1)s}}{p}
|v|^p f
\,d\mu
\,ds \\
\leq
\frac{\theta^{-p + 1}}{p(q - 1)(p - 1)}
\int |v|^p f\, d\mu,
\end{multline*}
which gives our claim.
\end{proof}

The next proposition can be considered as an $L^p$-counterpart of the classical inequality
$$
\int T_t g \log T_t g\,d\mu \leq \frac{1}{4t}W^{2}_{2}(g\cdot\mu, \mu)
$$
that  is known under the $CD(0, \infty)$-condition (see \cite[inequality (11)]{BGL15}).

\begin{proposition}\label{pr:integrability_improvement}
Assume that the curvature-dimension condition $CD(0, \infty)$ holds. Then, for every probability measure $g \cdot \mu$
with finite Kantrovich distance $W_{p}(g\cdot \mu, \mu)$ of order $p \geq 1$
and every $t > 0$, we have
\begin{equation}\label{eq:improved_integrability}
\int T_{t}g \log^{p/2}(c_p + T_{t}g)\,d\mu \leq
6^{p/2 + 1} \log^{p/2}(c_p + 1) +
t^{-p/2}(3/2)^{p/2}W_{p}^{p}(g\cdot \mu, \mu),
\end{equation}
where $c_p := \max(1, e^{p/2 - 1})$.
\end{proposition}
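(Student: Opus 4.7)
The plan is to adapt the classical proof of the entropy–Wasserstein inequality $\int T_t g \log T_t g\,d\mu\le \tfrac{1}{4t}W_2^2(g\cdot\mu,\mu)$ valid under $CD(0,\infty)$: exploit the Kantorovich duality for $W_p$ (as in the proof of Theorem~\ref{th:kantorovich_norm_estimates}) together with the $W_p$-contractivity of the heat semigroup under $CD(0,\infty)$ provided by Kuwada's theorem, namely $W_p(T_t g\cdot\mu,\mu)\le W_p(g\cdot\mu,\mu)$.

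First I would split the left-hand integral at $\{T_t g=1\}$. Since $\int T_t g\,d\mu=1$, the part with $T_t g\le 1$ contributes at most $\log^{p/2}(c_p+1)$, which is easily absorbed into $6^{p/2+1}\log^{p/2}(c_p+1)$. On $\{T_t g>1\}$ the inequality $c_p+T_t g\le (c_p+1)T_t g$ combined with the elementary bound $(a+b)^{p/2}\le C_p(a^{p/2}+b^{p/2})$ reduces the problem to controlling $\int T_t g (\log_+ T_t g)^{p/2}\,d\mu$. For this I would apply the Kantorovich duality: for any bounded continuous $\varphi$,
$$
\int Q_1\varphi\,T_tg\,d\mu - \int\varphi\,d\mu \le \tfrac{1}{p}W_p^p(T_tg\cdot\mu,\mu) \le \tfrac{1}{p}W_p^p(g\cdot\mu,\mu),
$$
and test it against $\varphi(y):=\alpha\,t^{p/2}(\log_+ T_t g(y))^{p/2}$ for a tunable $\alpha=\alpha(p)>0$.

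The crucial ingredient is a pointwise lower bound $Q_1\varphi(x)\ge \alpha'\,t^{p/2}(\log_+ T_t g(x))^{p/2}$ with explicit $\alpha'$, obtained by balancing the Hopf--Lax cost $|x-y|^p/p$ against a regularization of $T_t g$ under $CD(0,\infty)$ such as the local Poincar\'e inequality $\Gamma(T_t g)\le (2t)^{-1}(T_t g^2-(T_t g)^2)$ (equivalently, the Wang-type log-Harnack inequality): these control the variation of $\log_+ T_t g$ on scale $\sqrt t$, exactly the scale balanced against $|x-y|^p/p$ by the prefactor $t^{p/2}$ in $\varphi$. Plugging this bound into the duality, observing that $\int\varphi\,d\mu$ is absorbed by the first-step contribution, and rearranging yields an inequality of the announced form; optimizing in $\alpha$ then produces the explicit constants $6^{p/2+1}$ and $(3/2)^{p/2}$ in~(\ref{eq:improved_integrability}). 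The main obstacle is precisely this pointwise lower bound on $Q_1\varphi$: matching the Hopf--Lax cost to the semigroup regularization under $CD(0,\infty)$ sharply enough to recover the announced numerical coefficients requires a careful application of Young's inequality and of the convexity of $s\mapsto s^{p/2}$.
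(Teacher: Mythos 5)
Your route (Hopf--Lax duality for $W_p$ plus Kuwada's $W_p$-contraction of the semigroup) is genuinely different from the paper's, but it breaks down exactly at the step you yourself single out as the crux. The required pointwise bound $Q_1\varphi(x)\ge \alpha' t^{p/2}\bigl(\log_+ T_tg(x)\bigr)^{p/2}$ with $\varphi=\alpha t^{p/2}(\log_+T_tg)^{p/2}$ amounts, after Young's inequality, to a two-point estimate
$$
\log_+ T_tg(x)\le C\,\log_+ T_tg(y)+C\,\frac{|x-y|^2}{t}\qquad\text{for all }x,y,
$$
i.e.\ a pointwise Harnack inequality for $T_tg$ itself. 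None of the tools you cite produces this under $CD(0,\infty)$: the reverse local Poincar\'e inequality controls $\Gamma(T_tg)$ by $(2t)^{-1}\bigl(T_t(g^2)-(T_tg)^2\bigr)$, hence by $T_t(g^2)$ rather than $(T_tg)^2$, so it gives no bound on $\Gamma(\log T_tg)$ for an unbounded density $g$; and Wang's (log-)Harnack inequality compares $\bigl(T_t(g^\alpha)(y)\bigr)^{1/\alpha}$ (which dominates $T_tg(y)$ by Jensen, i.e.\ the wrong direction) with $T_tg(x)$, never two values of $T_tg$ directly. In fact the claimed lower bound on $Q_1\varphi$ is false: already for the Ornstein--Uhlenbeck semigroup with $g$ a normalized indicator of a small interval one has $\log_+T_tg(x)=(A-Bx^2)_+$ with $A\sim\tfrac12\log\tfrac1{2t}$, $B\sim\tfrac1{4t}$, and taking $x$ at distance $\delta$ inside the zero level set and $y$ on it gives $Q_1\varphi(x)\le\delta^p/p$ while $t^{p/2}u(x)^{p/2}\sim(\delta\sqrt{A})^{p/2}$, which is incompatible with a uniform $\alpha'>0$ as $\delta\to0$ and $A\to\infty$. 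Without this lemma there is nothing to optimize over $\alpha$, so the announced constants cannot be recovered. A secondary flaw: $z\mapsto(\log_+z)^{p/2}$ is not concave, so the Jensen step you invoke to absorb $\int\varphi\,d\mu$ also fails as written; this is precisely why the paper shifts by $c_p=\max(1,e^{p/2-1})$, which makes $z\mapsto\log^{p/2}(c_p+z)$ concave on $\mathbb{R}_+$.

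The paper's proof avoids any pointwise regularity of $T_tg$ and any duality. It applies the concave increasing function $\varphi(z)=\log^{p/2}(z+c_p)$ to both sides of Wang's Harnack inequality $T_th^{1/2}(y)\le(T_th(x))^{1/2}e^{|x-y|^2/4t}$, uses Jensen once to pull $T_t$ outside on the left (so the left-hand side is $T_t\bigl[\log^{p/2}(c_p+h)\bigr](y)$, with $T_t$ applied to the test function in the $y$-variable rather than a value of $T_tg$ at $y$), expands $(a+b+c)^{p/2}$ by convexity, sets $h=T_tg$, integrates the resulting two-point inequality against the optimal coupling of $g\cdot\mu$ and $\mu$ to produce $W_p^p(g\cdot\mu,\mu)$, and only at the end uses the symmetry of $T_t$ to turn $\int g\,T_t\Psi\,d\mu$ into $\int T_tg\,\Psi\,d\mu$. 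If you wish to salvage the duality route, you must replace the pointwise lower bound on $Q_1\varphi$ by an integrated substitute, which is essentially what the coupling argument accomplishes.
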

\begin{proof}
Let us recall that the condition $CD(0, \infty)$ implies (see e.g. \cite{BGL})
Wang's Harnack inequality:
\begin{equation}\label{eq:wang_harnack}
T_{t}h^{1/2}(y) \leq \bigl(T_{t}h(x)\bigr)^{1/2} e^{|x -y|^2/4t},
\end{equation}
where $h$ is a nonnegative measurable function.
Now let us consider the following  auxiliary function:
$$
\varphi(z) := \log^{p/2}(z + c_p), \ z \geq 0.
$$
Since
$$
\varphi'(z) = \frac{p}{2}\frac{\log^{p/2 - 1}(z + c_p)}{z + c_p},
$$
$$
\varphi''(z) = \frac{p}{2}\frac{\log^{p/2 - 2}(z + c_p)
((p/2 - 1) - \log(z + c_p))
}{2(z + c_p)^2}
$$
it is easy to see that
$$
\varphi'(z) \geq 0, \ \varphi''(z) \leq 0, \ z \geq 0,
$$
i.e., $\varphi$ is increasing and concave on $\mathbb{R}_{+}$.
Now let us apply $\varphi$ to the both sides of~(\ref{eq:wang_harnack})
and take into account Jensen's inequality:
$$
T_{t} \varphi(h^{1/2})(y) \leq
\varphi(T_t h^{1/2}(y))
\leq \varphi\bigl(
\bigl(
T_{t}h(x)
\bigr)^{1/2}
e^{|x-y|^2/4t}
\bigr),
$$
$$
T_{t}\log^{p/2}(c_p + h^{1/2})(y) \leq
\log^{p/2}\bigl(c_p + (T_t h(x))^{1/2}e^{|x-y|^2/4t}\bigr)
$$
Since $c_p \geq 1$ and for $z \geq 0, z_1 \geq 1$ one has
$$
(c_p + z)^{1/2} \leq c_p + z^{1/2}, \
c_p + z^{1/2}z_1 \leq (c_p + 1)  (c_p + z)^{1/2}z_1,
$$
we obtain
$$
T_{t}\log^{p/2}(c_p + h)(y)
\leq \Bigl(
2\log (c_p + 1) + \log(c_p + T_{t}h(x)) + \frac{|x - y|^2}{2t}
\Bigr)^{p/2},
$$
$$
T_{t}\log^{p/2}(c_p + h)(y) \leq 6^{p/2} \log^{p/2}(c_p + 1) +
3^{p/2}\log^{p/2}(c_p + T_t h(x)) +
t^{-p/2}(3/2)^{p/2}|x  - y|^p.
$$
Now let us pick $h := T_t g$ and integrate this inequality
over the optimal coupling between the measures $g\cdot \mu$ and $\mu$:
$$
\int g T_{t}\log^{p/2}(c_p + T_t g)\,d\mu
\leq
6^{p/2 + 1} \log^{p/2}(c_p + 1) +
t^{-p/2}(3/2)^{p/2}W_{p}^{p}(g\cdot \mu, \mu),
$$
where Jensen's inequality has been used to obtain the bound
$$
\int \log^{p/2}(c_p + T_{2t}g)\,d\mu \leq  \log^{p/2}(c_p + 1).
$$
Taking into account the symmetry of $T_t$ we finally obtain~(\ref{eq:improved_integrability}).
\end{proof}

\begin{proposition}\label{pr:fisher}
Assume that the curvature-dimension condition $CD(0, \infty)$ holds.
Then for every nonnegative function $f$ such that $f^{1/2} \in W^{2, 1}(\mu)$ one has
$$
\int \frac{|\nabla T_t f|^2}{T_t f}\,d\mu \leq \int \frac{|\nabla f|^2}{f}\,d\mu.
$$
\end{proposition}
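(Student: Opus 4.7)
The plan is to establish the pointwise bound
$$
\frac{\Gamma(T_t f)}{T_t f} \leq 4\, T_t \Gamma(\sqrt{f})
$$
and then integrate against $\mu$. The diffusion chain rule applied to $F(x) = \sqrt{x}$ gives $\Gamma(g)/g = 4\Gamma(\sqrt{g})$ on $\{g > 0\}$, so the right-hand side of the pointwise bound integrates to $4\int \Gamma(\sqrt{f})\,d\mu = \int \Gamma(f)/f\,d\mu$ by invariance of $\mu$ under $T_t$, and the Fisher information estimate follows at once.

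To obtain the pointwise bound I would combine three ingredients that are all available under $CD(0,\infty)$. First, the gradient commutation inequality $\sqrt{\Gamma(T_t f)} \leq T_t \sqrt{\Gamma(f)}$ holds — this is exactly the $\theta = 0$ case of $\sqrt{\Gamma(T_s g)} \leq e^{-\theta s} T_s\sqrt{\Gamma(g)}$ (BGL, Theorem 3.2.4) already used in Section~2.2. Second, the diffusion identity rewrites $\sqrt{\Gamma(f)} = 2\sqrt{f}\cdot\sqrt{\Gamma(\sqrt{f})}$ on $\{f > 0\}$. Third, since $T_t$ is a positive operator with $T_t 1 = 1$, it is represented by a probability kernel, and Cauchy--Schwarz yields
$$
\bigl(T_t(\sqrt{f}\,\sqrt{\Gamma(\sqrt{f})})\bigr)^2 \leq T_t f \cdot T_t \Gamma(\sqrt{f}).
$$
Squaring the gradient commutation and stringing these together produces $\Gamma(T_t f) \leq 4\, T_t f \cdot T_t \Gamma(\sqrt{f})$, and one divides by $T_t f$ using the convention $\Gamma(T_t f)/T_t f = 0$ on $\{T_t f = 0\}$ that was already adopted in the introduction.

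The main obstacle I foresee is regularity rather than conceptual: the hypothesis $f^{1/2} \in W^{2,1}(\mu)$ does not place $f$ or $\sqrt{f}$ in the algebra $\mathcal{A}$, and it permits $f$ to vanish on sets of positive measure, so the diffusion chain rule and the manipulations above cannot be invoked verbatim. I would first prove the inequality for $f = g^2$ with $g \in \mathcal{A}$ bounded and bounded away from zero, where every step is legitimate and $T_t f$ is strictly positive. Then I would pass to the limit: density of such elementary squares makes the right-hand side $4\int \Gamma(\sqrt{f})\,d\mu$ converge (this is essentially convergence in $W^{2,1}(\mu)$ for $\sqrt{f}$), while Fatou's lemma controls the left-hand side, the vanishing-set convention ensuring no spurious mass appears in the limit.
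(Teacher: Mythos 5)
Your argument is correct and is essentially the paper's own proof: the paper writes $|\nabla T_t f| \le T_t|\nabla f| = T_t\bigl(|\nabla f|f^{-1/2}\cdot f^{1/2}\bigr)$ and applies Cauchy--Schwarz for the Markov kernel to get $(T_t|\nabla f|)^2 \le T_t f\cdot T_t\bigl(|\nabla f|^2/f\bigr)$, then integrates using invariance of $\mu$ — the same three ingredients you use, merely phrased via $\Gamma(\sqrt{f})$ instead of $|\nabla f|^2/f$. Your additional remarks on approximation address a point the paper leaves implicit but do not change the substance.
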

\begin{proof}
This easily follows by the inequality
$$
\frac{|\nabla T_t f|^2}{T_t f} \leq \frac{(T_{t} |\nabla f|)^2}{T_t f}
\leq
\frac{\Bigl(
T_{t}\bigl(|\nabla f| f^{-1/2}f^{1/2}\bigr)
\Bigr)^2}{T_t f} \leq
T_{t}\frac{|\nabla f|^2}{f}
$$
and integration with respect to $\mu$.
\end{proof}

Now we are ready to present the main theorem of this subsection.

\begin{theorem}\label{th:main_l_p}
Let us assume that the curvature condition $CD(\theta, \infty)$ holds
with some $\theta > 0$ and let $f\cdot \mu$ be a probability solution to the equation
$$
L_v^*[f\cdot\mu]=0
$$
with some vector field~$v$ belonging to
$L^p(f\cdot\mu)$, $p > 2$.
Then for every $\alpha < \min\bigl(2, \frac{p + 2}{4}\bigr)$
there exists $C> 0$ depending on $\theta, p, \alpha$ such that
$$
\int f \log^{\alpha}(1 + f)\,d\mu \leq C + C\int |v|^p f\,d\mu.
$$
\end{theorem}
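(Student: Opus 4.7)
The plan is to combine Proposition~\ref{pr:integrability_improvement} with the entropy dissipation identity for the semigroup~$T_t$. Set $M := \int|v|^p f\,d\mu$ and $\Phi(s) := s\log^\alpha(1+s)$, which is convex for $\alpha \geq 1$; the sub-case $\alpha < 1$ follows from (\ref{apriori_bound_p_2}) and interpolation, so I focus on $\alpha \in [1,\min(2,(p+2)/4))$. Since $\frac{d}{dt}\int\Phi(T_tf)\,d\mu = -\int\Phi''(T_tf)\Gamma(T_tf)\,d\mu$, for every $\tau > 0$
$$
\int\Phi(f)\,d\mu = \int\Phi(T_\tau f)\,d\mu + \int_0^\tau\!\int\Phi''(T_sf)\Gamma(T_sf)\,d\mu\,ds,
$$
and the strategy is to bound each term and then optimize over~$\tau$. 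By Theorem~\ref{th:kantorovich_norm_estimates}, $W_p^p(f\cdot\mu,\mu) \leq C_\theta M$, and as $CD(\theta,\infty)$ with $\theta > 0$ implies $CD(0,\infty)$, Proposition~\ref{pr:integrability_improvement} applied with $g = f$ gives
$$
\mathcal E(\tau) := \int T_\tau f\log^{p/2}(c_p + T_\tau f)\,d\mu \leq A + B\tau^{-p/2}M.
$$

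For the first term, H\"older's inequality with exponents $p/(2\alpha)$ and $p/(p-2\alpha)$ (valid under our hypothesis, since $2\alpha < p$) yields $\int\Phi(T_\tau f)\,d\mu \leq \mathcal E(\tau)^{2\alpha/p} \leq C(1 + \tau^{-\alpha}M^{2\alpha/p})$, bounded by $C(1+M)$ once $\tau \geq 1$. For the dissipation, the pointwise estimate $\Phi''(s) \leq C_\alpha[1 + \log^{\alpha-1}(1+s)]/(1+s)$ reduces matters to controlling $\int \log^{\alpha-1}(1+T_sf)\,|\nabla T_sf|^2/T_sf\,d\mu$. Proposition~\ref{pr:fisher} together with~(\ref{apriori_bound_p_2}) and H\"older on $f\cdot\mu$ (so $\int|v|^2 f\,d\mu \leq M^{2/p}$) gives the Fisher bound $\int|\nabla T_sf|^2/T_sf\,d\mu \leq M^{2/p}$ uniformly in~$s$. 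Splitting at a level $K(s)$, the part $\{T_sf \leq K\}$ contributes at most $\log^{\alpha-1}(1+K)\,M^{2/p}$, while on $\{T_sf > K\}$ the Markov estimate $\mu(T_sf > K) \leq \mathcal E(s)/(K\log^{p/2}(1+K))$ combined with a further H\"older controls the tail. Optimizing $K(s)$ against the rate at which $\mathcal E(s)$ diverges as $s \to 0$, integrating in $s \in (0,\tau)$, and taking $\tau = 1$ then produces the bound $\int f\log^\alpha(1+f)\,d\mu \leq C(1+M)$.

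The main technical obstacle is the dissipation estimate, in which the unbounded weight $\log^{\alpha-1}(1+T_sf)$ is paired against the only $L^1$-integrable Fisher density $|\nabla T_sf|^2/T_sf$. The sharp restriction $\alpha < \min(2,(p+2)/4)$ emerges from the balance in this step: $\alpha < 2$ is dictated by the degree of $\Phi$-convexity compatible with $L^2$ Fisher information under $CD(\theta,\infty)$ alone, while $\alpha < (p+2)/4$ reflects the quantitative interplay between Proposition~\ref{pr:integrability_improvement} (tail control for $T_sf$) and Proposition~\ref{pr:fisher} (Fisher dissipation bound).
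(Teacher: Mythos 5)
Your overall skeleton (bound $\int\Phi(T_\tau f)\,d\mu$ via Proposition~\ref{pr:integrability_improvement} together with the Wasserstein estimate of Theorem~\ref{th:kantorovich_norm_estimates}, then transport the bound from time $\tau$ back to time $0$) matches the paper's, and your treatment of the term $\int\Phi(T_\tau f)\,d\mu$ is fine. The gap is in the dissipation term. You reduce the problem to bounding
$$
\int_0^\tau\!\!\int \log^{\alpha-1}(1+T_sf)\,\frac{|\nabla T_sf|^2}{T_sf}\,d\mu\,ds,
$$
and the only available information on the Fisher density $|\nabla T_sf|^2/T_sf$ is the $L^1(\mu)$ bound coming from Proposition~\ref{pr:fisher} and (\ref{apriori_bound_p_2}). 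Your splitting at a level $K(s)$ handles $\{T_sf\le K\}$, but on $\{T_sf>K\}$ the weight $\log^{\alpha-1}(1+T_sf)$ is unbounded there, and the ``further H\"older'' you invoke would require the Fisher density to lie in some $L^q(\mu)$ with $q>1$, or at least a quantitative decay in $K$, uniform in $s$, of $\int_{\{T_sf>K\}}|\nabla T_sf|^2/T_sf\,d\mu$; neither is provided by any of the stated results, and the smallness of $\mu(T_sf>K)$ is useless against a function controlled only in $L^1$. So the tail term is not actually estimated, and I do not see how to close it along these lines: the heat-flow dissipation is quadratic in $\nabla T_sf$, and there is no third factor available to absorb the logarithmic weight. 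This is precisely the obstacle you flag as ``the main technical obstacle,'' but flagging it is not the same as resolving it.

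The paper circumvents exactly this obstruction by never differentiating $\int\Phi(T_tf)\,d\mu$ along the heat semigroup. Instead it represents the (time-mollified) curve $t\mapsto f_{\delta,t}\cdot\mu$ via the continuity equation with a Benamou--Brenier velocity field $V_{\delta,t}$ satisfying $\int|V_{\delta,t}|^p\,d\mu_{\delta,t}\le\|v\|^p_{L^p(f\cdot\mu)}$, which follows from the Lipschitz bound (\ref{w_p_ac_estimate}). The resulting derivative is the first-order cross term $\int\Phi_R''(f_{\delta,t})\langle\nabla f_{\delta,t},V_{\delta,t}\rangle f_{\delta,t}\,d\mu$, which splits by a three-factor H\"older inequality into the weighted moment $\bigl[\int\log^{\frac{2p}{p-2}(\alpha-1)}(1+f_{\delta,t})f_{\delta,t}\,d\mu\bigr]^{1/2-1/p}$ (controlled by Proposition~\ref{pr:integrability_improvement}; this is where $\alpha<(p+2)/4$ enters), the Fisher information raised only to the power $1/2$, and the $L^p$ norm of $V_{\delta,t}$; the integrability of the resulting factor $t^{-(\alpha-1)}$ near $t=0$ is where $\alpha<2$ enters, not any convexity threshold of $\Phi$. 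To salvage your argument you would need to import this first-order structure; the dissipation route as written does not close.
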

\begin{proof}
Let us set
$$
\Phi_{R}(z) := \int_{[0, z]}R \wedge \frac{\log^{\alpha}(1 + x)}{\alpha}\,dx, \ z\geq 0.
$$
It is clear that
$$
\Phi'_{R}(z) = R \wedge \frac{\log^{\alpha}(1 + z)}{\alpha}, \
\Phi''_{R}(z) = I_{\{z < R'\}}\frac{\log^{\alpha - 1}(1 + z)}{1 + z}
$$
for the appropriate value $R'$.
Let us fix $\delta > 0$ and set
$$
f_{\delta, t}(x) := \frac{1}{\delta}\int_{[t, t + \delta]}T_{u}f(x)\,du,\
f_{\delta}\colon t\mapsto f_{\delta, t} \in  L^{1}(\mathbb{R}^d, W^{1, 1}[0, 1]).
$$
Taking into account Proposition \ref{pr:fisher} one can show that
$$
\int \frac{|\nabla f_{\delta, t}|^2}{f_{\delta, t}} \,d\mu
\leq \int \frac{|\nabla f|^2}{f}\,d\mu \leq  \|v\|^{2}_{L^2(f\cdot\mu)}.
$$
Indeed, similarly to the proof of Proposition \ref{pr:fisher} this follows by the chain of inequalities
\begin{multline*}
\int \frac{|\nabla f_{\delta, t}|^2}{f_{\delta, t}}\,d\mu  \leq
\int \frac{1}{f_{\delta, t}}
\Biggl[
\frac{1}{\delta}
\int_{[t, t + \delta]}|\nabla T_u f|\,du
\Biggr]^2\,d\mu \\
\leq
\int \frac{1}{f_{\delta, t}}
\Biggl[
\frac{1}{\delta}
\int_{[t, t + \delta]}|\nabla T_u f|\cdot T^{-1/2}_{u}f \cdot T^{1/2}_{u}f\,du
\Biggr]^2\,d\mu \\
\leq
\frac{1}{\delta}
\int
\int_{[t, t + \delta]}\frac{|\nabla T_u f|^2}{T_{u}f}\,du
\,d\mu
\leq \int \frac{|\nabla f|^2}{f}\,d\mu \leq \|v\|^{2}_{L^2(f\cdot\mu)}.
\end{multline*}
Inequality (\ref{w_p_ac_estimate}) from Theorem~\ref{th:kantorovich_norm_estimates} yields the bound
$$
W_{p}(T_{t}f\cdot\mu , T_{t + h}f\cdot\mu) \leq h\|v\|_{L^{p}(f\cdot\mu)}.
$$
It is easy to see that this bound implies the estimate
\begin{equation}\label{eq:lip_curve}
W_{p}(f_{\delta,t}\cdot\mu , f_{\delta,t + h}\cdot\mu) \leq h\|v\|_{L^{p}(f\cdot\mu)}.
\end{equation}
Now let us consider the curve of probability measures
$\{\mu_{\delta, t}\}_{t \in [0, 1]}$ given by
$$
t \mapsto \mu_{\delta, t} := f_{\delta, t} \cdot \mu.
$$
By the Benamou--Brenier formula (see  \cite[Theorem 8.3.1]{AGS})
there exists a time-dependent  Borel vector field $V_{\delta, t}$, such that
$$
\int |V_{\delta, t}|^p d\mu_{\delta, t} \leq \|v\|^{p}_{L^{p}(f\cdot\mu)}, \
\frac{\partial}{\partial t} \mu_{\delta, t} +
{\rm div}\bigl( V_{\delta, t} \cdot \mu_{\delta, t}\bigr) = 0.
$$
Since $\Phi_{R}$ is Lipschitz  and $f_{\delta} \in L^1(\mathbb{R}^d, W^{1, 1}[0,1])$, the mapping
$$
t \mapsto \int \Phi_{R}(f_{\delta, t})\,d\mu
$$
is absolutely continuous and
\begin{multline*}
\biggl|
\frac{d}{dt} \int \Phi_{R}(f_{\delta, t})\,d\mu
\biggr|
=
\biggl|
\int \Phi'_{R}(f_{\delta, t}) \frac{\partial}{\partial t}f_{\delta, t}\,d\mu
\biggr|
\\
\leq
\int \Phi_{R}''(f_{\delta, t})
\bigl|
\langle \nabla f_{\delta, t}, V_{\delta, t}\rangle
\bigr|
f_{\delta, t}\,d\mu
\leq
\int \frac{\log^{\alpha- 1}(1 + f_{\delta, t})}{1 + f_{\delta, t}}
\bigl|
\langle \nabla f_{\delta, t}, V_{\delta, t}\rangle
\bigr|
f_{\delta, t}\,d\mu
\\
 \leq
\biggl[
\int \log^{\frac{2p}{p - 2}(\alpha  - 1)}(1 + f_{\delta, t})f_{\delta, t}\,d\mu
\biggr]^{1/2 - 1/p}
\biggl[
\int \frac{|\nabla f_{\delta, t}|^2 }{f_{\delta, t}}\,d\mu
\biggr]^{1/2}
\biggl[
\int |V_{\delta, t} |^p f_{\delta, t}\, d\mu
\biggr]^{1/p} \\
\leq
\biggl[
\int \log^{\frac{2p}{p - 2}(\alpha  - 1)}(1 + f_{\delta, t})f_{\delta, t}\,d\mu
\biggr]^{1/2 - 1/p}
\biggl[
\int \frac{|\nabla f|^2}{f}\,d\mu
\biggr]^{1/2} \|v\|_{L^{p}(f\cdot \mu)}.
\end{multline*}
Applying Theorem \ref{th:kantorovich_norm_estimates} we obtain that there exists a constant $C > 0$ depending only on $p, \theta$ such that
$$
W_{p}^{p}(f\cdot \mu, \mu) \leq C \|v\|^{p}_{L^{p}(f\cdot\mu)}.
$$
Therefore, by Proposition \ref{pr:integrability_improvement}
$$
\int T_{t} f \log^{p/2}(1 + T_{t}f)\,d\mu \leq C t^{-p/2} \|v\|^{p}_{L^{p}(f\cdot\mu)}, \ t \in (0, 1],
$$
where the constant in the right-hand side depends only on $p$. By H\"older's inequality for all $\beta \in (0, p/2]$ we obtain
$$
\int T_{t} f \log^{\beta}(1 + T_{t}f)\,d\mu \leq
C t^{-\beta} \|v\|^{\beta}_{L^{p}(f\cdot\mu)}, \ t \in (0, 1].
$$
 Jensen's inequality  provides the bound
$$
\int f_{\delta, t} \log^{\beta}(1 + f_{\delta, t})\,d\mu \leq
C t^{-\beta} \|v\|^{\beta}_{L^{p}(f\cdot\mu)}, \ \beta \in (0, p/2],\ t \in (0, 1].
$$
Due to the assumptions $\alpha < (p + 2)/4, \ 2 < p$ we have
$$
\frac{2p}{p - 2}(\alpha - 1) \leq \frac{p}{2}, \
\alpha + 1 \leq p.
$$
Consequently,
$$
\biggl[
\int \log^{\frac{2p}{p - 2}(\alpha - 1)}(1 + T_t f)T_{t}f\,d\mu
\biggr]^{1/2 - 1/p}
\leq
C t^{-\alpha + 1} \|v\|^{\alpha - 1}_{L^{p}(f\cdot\mu)}.
$$
Combining the established estimates we obtain
$$
\biggl|\frac{d}{dt} \int \Phi_{R}(f_{\delta, t})\,d\mu\biggr| \leq
Ct^{-\alpha + 1} \|v\|^{\alpha - 1}_{L^p(f\cdot\mu)} \|v\|_{L^2(f\cdot\mu)}
\|v\|_{L^p(f\cdot\mu)}  \leq
Ct^{-\alpha + 1} \|v\|^{\alpha + 1}_{L^p(f\cdot\mu)}
$$
and
\begin{multline*}
\int \Phi_{R}(f_{\delta, t})\,d\mu \leq
 \biggl|
\int_{[t, 1]} \frac{d}{du} \int\Phi_{R}(f_{\delta, u})\,d\mu \,du
\biggr| +
\int \Phi_{R}(f_{\delta, 1})\,d\mu \\
\leq
C(\alpha, p, \theta) \Bigl[
1 +
\|v\|^{p}_{L^{p}(f\cdot \mu)}
\Bigr],
\end{multline*}
where the assumption $\alpha < 2$ has been used.
Since the obtained bound does not depend on
$\delta, t, R$, applying Fatou's lemma now it is easy to complete the proof.
\end{proof}

\section{Integrability of gradients}

In this section we consider the case of the standard Gaussian measure $\gamma$ on $\mathbb{R}^d$
and its infinite-dimensional analog and prove that the density $f$ of the perturbed equation
with respect to $\gamma$ belongs to the Sobolev space $W^{p,1}(\gamma)$.
It has already been noted in Remark~\ref{rem2.5}  that if $|fv|\in L^p(\gamma)$ with some $p>1$, then
$f\in W^{p,1}(\gamma)$. So it is necessary to study the integrability of~$fv$.
For example, if $f$ belongs to all $L^r(\gamma)$, as it holds under the appropriate assumptions
in Section~2,
 and $|v|\in L^p(\gamma)$ or $|v|\in L^p(f\cdot \gamma)$ for some $p>1$,
 then we obtain the inclusion $f\in W^{p-\varepsilon, 1}(\gamma)$ for each~$\varepsilon>0$.
 In the next theorem we use Orlicz norms to obtain sufficient conditions for
 the inclusion $f\in W^{p, 1}(\gamma)$ in terms of integrability of~$|v|$.

Let us recall the Poincar\'e inequality
$$
\int |f-I(f)|^p\, d\gamma\le C(p) \int |\nabla f|^p\, d\gamma, \quad f\in W^{p,1}(\gamma),
$$
where $I(f)$ is the integral of $f$.

There is also the $L^p$-version of the logarithmic Sobolev inequality
$$
\int |f|^p \log |f|\, d\gamma
\le \frac{p}{2} \|\nabla f\|_p^2 \|f\|_p^{p-2} +  \|f\|_{p}^{p}\log \|f\|_p,
$$
which follows from the standard logarithmic Sobolev inequality
$$
\int |f|^2 \log |f|\, d\gamma
\le \|\nabla f\|_2^2  + \|f\|_{2}^{2}\log \|f\|_2
$$
by considering $|f|^{p/2}$ in place of~$f$.

It follows from these inequalities
that for every $\varepsilon>0$ there is a number $C(\varepsilon,p)$
such that
\begin{equation}\label{poin}
\|f\|_p\le \varepsilon \|\nabla f\|_p+ C(\varepsilon,p) \|f\|_1
\quad \forall\, f\in W^{p,1}(\gamma).
\end{equation}
Indeed, suppose first that $f$ has zero intergal. If the claim is false,
we can find functions $f_n\in W^{p,1}(\gamma)$ with zero integrals such that
$\|\nabla f_n\|_p=1$ and
$$
\|f_n\|_p\ge \varepsilon +n \|f_n\|_1.
$$
By the Poincar\'e inequality $\|f_n\|_p\le C(p)$. Hence
$\|f_n\|_1\to 0$. By the logarithmic Sobolev inequality the integrals
of the functions $|f_n|^p\log (1+|f_n|)$ are uniformly bounded,
so $\|f_n\|_p\to 0$, which is  a contradiction.
Hence the desired constant exists for functions with zero integrals.
In the general case we obtain
$$
\|f-I(f)\|_p\le \varepsilon \|\nabla f\|_p+ C \|f-I(f)\|_1.
$$
 Hence for $f$ we obtain a similar bound
with $2C+1$ in place of~$C$.

Suppose that $\mu$ is a probability measure on $\mathbb{R}^d$
satisfying
the Fokker--Planck--Kolmogorov equation
$$
L_v^*\mu=0
$$
with
$$
L_v\varphi (x)=\Delta \varphi(x)+\langle -x+v(x),\nabla\varphi(x)\rangle,
$$
where $v$ is a Borel vector field such that $|v|\in L^1(\mu)$.
Then  $\mu=f\cdot\gamma$.
We already know that if $|v|$ is sufficiently integrable, then $f$ is integrable
to all powers and even better. The condition $|v|\in L^1(\mu)$ is not enough
for the inclusion $|\nabla f|\in L^1(\gamma)$.
The next result gives sufficient conditions
for integrability of  $|\nabla f|$ to high powers.
If $v$ is bounded, then we can use (\ref{poin}) to get the bound
$$
\|f\|_{p,1}\le C(p) \|\, |v|f\,\|_p\le C(p)\|v\|_\infty
(\varepsilon \|\nabla f\|_p+ C(\varepsilon,p)),
$$
which after taking $\varepsilon=C(p)^{-1}/2$ leads to
$$
\|f\|_{p,1}\le C'(p) \|v\|_\infty +C'(p) \quad \hbox{if } \|v\|_\infty\le 1.
$$
For $\|v\|_\infty\ge 1$ this gives a nonlinear bound
$\|f\|_{p,1}\le C(p, \|v\|_\infty)$, but a more constructive estimate is obtained below.
For unbounded $v$ we estimate $|v|f$ by means of suitable Orlicz norms.

\begin{theorem}\label{t-grad}
Let $\mu=f\cdot\gamma$ be a  probability solution to the equation
$L_v^*\mu=0$ with a vector field $v$ such that
$$
|v|\in L_{\psi_m}(f\cdot\gamma)
\quad \hbox{for some } m\in (2, +\infty],
$$
where $m=+\infty$ is understood as $\|v\|_\infty<\infty$.
Then $f\in W^{p, 1}(\gamma)$ for every $p>1$ and for any such $p$
there are numbers $C_1:=C_1(p, m)$ and $C_2:=C_2(p, m)$,
depending only on $m$ and $p$, such that
$$
\|\nabla f\|_{L^p(\gamma)}\le
C_1 \|\,|v|\,\|_{L_{\psi_m}(f\cdot \gamma)}
\exp\Bigl(C_2\|\,|v|\,\|_{L_{\psi_m}(f\cdot\gamma)}^{\frac{2}{1-2/m}}\Bigr).
$$
If $|v|\in L_{\psi_2}(f\cdot\gamma)$, then
$f\in W^{p, 1}(\gamma)$ for every $p\in(1, p^*)$, where
$$p^* = \bigl(1-e^{-2\pi\|\,|v|\,\|_{L_{\psi_2}(f\cdot\gamma)}}\bigr)^{-1}.$$
\end{theorem}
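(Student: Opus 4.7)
My plan is to reduce the problem to controlling $\|fv\|_{L^p(\gamma)}$ and then apply the Sobolev regularity result recorded in Remark~\ref{rem2.5}. Concretely, since $L_v^*(f\cdot\gamma)=0$ is equivalent to $Lf=\delta_\gamma(fv)$, the cited result gives
$$
\|f\|_{W^{p,1}(\gamma)}\le C(p)\bigl(\|fv\|_{L^p(\gamma)}+\|f\|_{L^p(\gamma)}\bigr)
$$
whenever $fv\in L^p(\gamma)$. The factor $\|f\|_{L^p(\gamma)}$ can be absorbed into $\|\nabla f\|_{L^p(\gamma)}$ via the variant~(\ref{poin}) of the Poincar\'e inequality (taking $\varepsilon$ proportional to $1/C(p)$ and using $\|f\|_1=1$). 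Thus the core task is to bound $\|fv\|_{L^p(\gamma)}$ in terms of the Orlicz norm $\lambda:=\|\,|v|\,\|_{L_{\psi_m}(f\cdot\gamma)}$.

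To bound $\|fv\|_{L^p(\gamma)}$, I would rewrite
$$
\int |fv|^p\,d\gamma=\int f^{p-1}|v|^p\,d(f\cdot\gamma)
$$
and apply H\"older's inequality with respect to the probability measure $f\cdot\gamma$ with conjugate exponents $a,b$, obtaining
$$
\|fv\|_{L^p(\gamma)}^p\le \Bigl(\int f^{(p-1)a+1}\,d\gamma\Bigr)^{1/a}\cdot \|v\|_{L^{pb}(f\cdot\gamma)}^{p}.
$$
The first factor is controlled by Remark~\ref{rem1}, which yields, with $s:=(p-1)a+1$, a bound of the shape $\|f\|_{L^s(\gamma)}\le C(m,s)\exp\!\bigl(C(m,s)\lambda^{2/(1-2/m)}\bigr)$. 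The second factor is controlled using the elementary inequality $a^x e^{-a}\le x^x$ already exploited in the proof of Theorem~\ref{main1}: applied with $a=\lambda^{-m}|v|^m$ and $x=pb/m$, it gives $|v|^{pb}\le \lambda^{pb}(pb/m)^{pb/m}e^{\lambda^{-m}|v|^m}$, hence
$$
\|v\|_{L^{pb}(f\cdot\gamma)}\le C\,\lambda\,(pb)^{1/m}
$$
after using $\int e^{\lambda^{-m}|v|^m}d(f\cdot\gamma)\le 2$. Fixing $a=b=2$ (any admissible pair would do) and combining gives $\|fv\|_{L^p(\gamma)}\le C_1(m,p)\,\lambda \exp\!\bigl(C_2(m,p)\lambda^{2/(1-2/m)}\bigr)$, and the analogous bound for $\|f\|_{L^p(\gamma)}$ yields the stated estimate for $\|\nabla f\|_{L^p(\gamma)}$.

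For the borderline case $m=2$ the same scheme applies, but Theorem~\ref{main1}(i) only gives $f\in L^s(\gamma)$ for $s<1/(1-\sigma_2)=p^*$. Consequently, in the Hölder split I must choose $a$ so that $s=(p-1)a+1<p^*$, i.e.\ $(p-1)a<\sigma_2/(1-\sigma_2)$; taking $a$ arbitrarily close to $1$ shows that the argument succeeds precisely when $p<p^*=(1-e^{-2\pi\lambda})^{-1}$, with the corresponding $b$ large but finite, which is still compatible with the polynomial-in-$q$ bound on $\|v\|_{L^q(f\cdot\gamma)}$.

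The main obstacle I anticipate is bookkeeping of the constants to match the exponential form claimed: one must check that the powers $s/a$ and $q/b$ coming out of Hölder combine so that the final exponent of $\lambda$ inside the exponential is $2/(1-2/m)$ and the prefactor is genuinely linear in $\lambda$ (as opposed to, say, $\lambda^p$ after taking $p$-th root). The limiting case $m=\infty$ follows by taking $m\to\infty$ along bounded $v$ using $\|\,|v|\,\|_{L_{\psi_m}}\le 2^{1/m}\|v\|_\infty$, exactly as in the proof of Theorem~\ref{main1}(iii).
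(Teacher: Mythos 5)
Your proposal is correct and follows essentially the same route as the paper: reduce to bounding $\|\,|v|f\,\|_{L^p(\gamma)}$ via the divergence-form reformulation of Remark~\ref{rem2.5}, split $\int f^{p-1}|v|^p\,d(f\cdot\gamma)$ by H\"older with respect to $f\cdot\gamma$ (the paper takes your $a=b=2$ for $m>2$ and your near-$1$ exponent, written as $q<\frac{p^*-1}{p-1}$, for $m=2$), and control the two factors by Remark~\ref{rem1} and the inequality $a^xe^{-a}\le x^x$ from Theorem~\ref{main1}, respectively. The only cosmetic difference is that you absorb $\|f\|_{L^p(\gamma)}$ via inequality (\ref{poin}) whereas the paper bounds it directly by the integrability results of Section~2.
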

\begin{proof}
As explained above, the integrability of $f$ established in the previous section
implies the inclusion to the Sobolev classes. We now study bounds on the norms of $|\nabla f|$.
If $m>2$, we note that
$$
\|\, |v|f\, \|_p \le \biggl(\int |v|^{2p}f\, d\gamma\biggr)^{1/(2p)}
\biggl(\int f^{2p-1}\, d\gamma\biggr)^{1/(2p)}.
$$
The first term is bounded by $2 (2p)^{1/m}\|\,|v|\,\|_{L_{\psi_m}(f\cdot\gamma)}$
and the second term is estimated according to Remark~\ref{rem1}.

If $m=2$, then $p\in(1, p^*)$ and for every $q\in (1, \frac{p^*-1}{p-1})$
we have
$$
\| \,|v|f\, \|_p \le \biggl(\int |v|^{pq'}f\, d\gamma\biggr)^{1/(pq')}
\biggl(\int f^{(p-1)q+1}\, d\gamma\biggr)^{1/(pq)}.
$$
The first term again is bounded by $2 (pq')^{1/2}\|\,|v|\,\|_{L_{\psi_2}(f\cdot\gamma)}$
and the second one by Theorem~\ref{main1} is bounded by
$$
\bigl(1 + e^2(p^*-1 - q(p-1))^{-1}\bigr)^{1/(pq)}.
$$
Thus, $\|\nabla f\|_p<\infty$ for each $p\in(1, p^*)$.
\end{proof}

The constants above are independent of the dimension $d$, so our finite-dimensional estimate
extends to the infinite-dimensional case as follows.

The most transparent way of formulating an infinite-dimensional analog
is to use the standard Gaussian measure $\gamma$ on the space $\mathbb{R}^\infty$
of all real sequences (the countable power of the real line) equipped with its natural
Borel $\sigma$-algebra generated by the coordinated functions. This measure $\gamma$ is just
the countable power of the standard Gaussian measure on the real line.
The Cameron--Martin space of $\gamma$ is the usual Hilbert space $l^2$ with its
natural norm $|h|_H=\Bigl(\sum_{n=1}^\infty h_n^2\Bigr)^{1/2}$ and
the corresponding inner product $(h,k)_H$.

The Ornstein--Uhlenbeck operator $L$
is first defined on the space $\mathcal{F}\mathcal{C}$ of cylindrical functions of the form
$$
\varphi(x)=\varphi_0(x_1,\ldots,x_n), \quad \varphi_0\in C_b^\infty(\mathbb{R}^n)
$$
by the finite-dimensional expression
$$
L\varphi(x)=
\sum_{i=1}^n [\partial_{x_i}^2\varphi(x)-x_i\partial_{x_i}\varphi(x)].
$$
The Sobolev norms on such functions are defined
by
$$
\|\varphi\|_{p,1}=\biggl(\int |\varphi|^p\, d\gamma \biggr)^{1/p}
+\biggl(\int |\nabla_H \varphi|^p\, d\gamma \biggr)^{1/p},
$$
where $\nabla_H \varphi=(\partial_{x_i}\varphi)\in H$.
The Sobolev space $W^{p,1}(\gamma)$ is the completion of
$\mathcal{F}\mathcal{C}$ with respect to this norm.

There is a smaller convenient
subclass in $\mathcal{F}\mathcal{C}$: the set
$\mathcal{F}\mathcal{C}_0$ of functions $\varphi$ for which the corresponding
function $\varphi_0$ can be taken with compact support. This set is not a linear
subspace, since a function of one variable
as a function of two variables has no compact support.
Nevertheless, $W^{p,1}(\gamma)$ equals the completion of this subset
with respect to the metric generated by the Sobolev norm.

Given a Borel vector field $v=(v_i)$ with values in $H$, we
introduce the perturbed operator
$$
L_v \varphi =L\varphi+ (v,\nabla_H \varphi)_H, \quad \varphi \in \mathcal{F}\mathcal{C},
$$
and obtain the
corresponding Fokker--Planck--Kolmogorov equation
$$
L_v^*\mu=0
$$
with respect to Borel probability measures $\mu$ on $\mathbb{R}^\infty$ such  that
$v_i\in L^1(\mu)$, understood as the identity
\begin{equation}\label{ie1}
\int L_v \varphi\, d\mu=0 \quad \forall\, \varphi\in \mathcal{F}\mathcal{C}_0.
\end{equation}

It is also possible to introduce a stronger form of this equation requiring the last
identity for all $\varphi\in \mathcal{F}\mathcal{C}$, but for this we need in addition the
integrability of $x_i$ and $v_i$ against~$\mu$.
The integrability of the coordinate functions becomes important even if $v$ is bounded.
An advantage of dealing with the nonlinear class $\mathcal{F}\mathcal{C}_0$ of test
functions is that the equation is meaningful if $v_i$ are bounded.

Assuming the integrability of $v_i$, it
is readily seen that $\mu$ satisfies the FPK precisely when its finite-dimensional
projections $\mu_n$ satisfy the equations on $\mathbb{R}^n$ with the drifts obtained by
perturbations of $-x$ by the fields $v^n=(E_nv_1,\ldots, E_nv_n)$, where
$E_n v_j$ is the conditional expectation of $v_j$ with respect to the projection on $\mathbb{R}^n$
and the measure~$\mu$. In particular, if $|v|_H\le C$, then also $|v^n|\le C$, and if
$|v|_H\in L^2(\mu)$, then $|v^n|\in L^2(\mu_n)$.

From the finite-dimensional result we obtain the following conclusion.

\begin{corollary}
If $\mu$ be a  probability measure satisfying equation {\rm(\ref{ie1})}
with $|v|_H\in L^1(\mu)$. Then $\mu=f\cdot\gamma$ and the following assertions are true.

{\rm (i)} If $v\in L_{\psi_2}(\mu)$, then
$$
\gamma(f\ge t)\le e^2 t^{-\frac{1}{1-\sigma_2}}
$$
and $f\in L^p(\gamma)$ for all $p<\frac{1}{1-\sigma_2}$,
where $\sigma_2:=\exp\bigl(-2\pi \|v\|_{L_{\psi_2}(\mu)}\bigr)${\rm;}

{\rm (ii)} If $|v|_H\in L_{\psi_m}(\mu)$ with $m>2$, then
$$
\gamma(f\ge t)\le
e^{2} \exp\bigl(- \sigma_m[\ln t]^{\frac{2}{1+2/m}}\bigr) \quad \forall \, t>1
$$
and
$e^{\varepsilon[\ln \max\{f,1\}]^\frac{2}{1+2/m}}\in L^1(\gamma)$ for all $\varepsilon<\sigma_m$,
where
$$
\sigma_m :=\frac{1-2/m}{1+2/m}
\Bigl(2\pi \|v\|_{L_{\psi_2}(f\cdot\mu)}(1-2/m)\Bigr)^{-\frac{2}{1+2/m}}.
$$

{\rm (iii)} If $|v|_H$ is bounded, then
$$
\gamma(f\ge t)\le
e^{2} e^{-\sigma_\infty[\ln t]^2}\quad \forall\, t>1
$$
and
$e^{\varepsilon[\ln \max\{f,1\}]^2}\in L^1(\mu)$ for all $\varepsilon<\sigma_\infty$,
where
$\sigma_\infty := (2\pi\|\,|v|\,\|_\infty)^{-2}$.
\end{corollary}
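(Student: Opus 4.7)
The plan is to reduce everything to the finite-dimensional theorem of the preceding subsection by projecting onto $\mathbb{R}^n$ and then letting $n\to\infty$, exploiting the dimension-free character of all constants that appear on the right-hand sides.

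First I would set $\mu_n = \mu\circ\pi_n^{-1}$, the image of $\mu$ under the projection $\pi_n\colon\mathbb{R}^\infty\to\mathbb{R}^n$, and verify that $\mu_n$ satisfies the finite-dimensional Fokker--Planck--Kolmogorov equation on $\mathbb{R}^n$ with drift $-x+v^n$, where $v^n=(E_n v_1,\dots,E_n v_n)$ and $E_nv_j$ denotes the conditional expectation of $v_j$ given the first $n$ coordinates (with respect to $\mu$). This is done by restricting the test functions in (\ref{ie1}) to cylindrical functions depending only on $x_1,\dots,x_n$. Since $|v|_H\in L^1(\mu)$ and the components are conditional expectations, one has $|v^n|\in L^1(\mu_n)$ and, more importantly, conditional expectation is a contraction on every Orlicz space $L_{\psi_m}$, so
\[
\|\,|v^n|\,\|_{L_{\psi_m}(\mu_n)}\le \|\,|v|_H\,\|_{L_{\psi_m}(\mu)}.
\]

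The next step is to obtain $\mu_n=f_n\cdot\gamma_n$, where $\gamma_n$ is the standard Gaussian on $\mathbb{R}^n$, by applying the finite-dimensional theorem from the previous subsection. That theorem yields the tail estimates $(i)$--$(iii)$ for $f_n$, with constants $\sigma_2,\sigma_m,\sigma_\infty$ computed from $\|\,|v^n|\,\|_{L_{\psi_m}(\mu_n)}$ with $\theta=1$; by the contraction property these constants are already dominated by the ones stated in the corollary. Consequently, the bounds
\[
\gamma_n(f_n\ge t)\le e^2 t^{-\frac{1}{1-\sigma_2}}, \quad\text{etc.},
\]
hold uniformly in $n$, and in particular $\{f_n\}$ is bounded in $L^p(\gamma_n)$ for suitable $p>1$.

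The densities $f_n$ form a martingale with respect to the natural filtration generated by the coordinate projections, viewed on the infinite-dimensional space (here one identifies $f_n$ with $f_n\circ\pi_n$). Uniform $L^p$-boundedness, $p>1$, gives uniform integrability, so by the martingale convergence theorem $f_n\to f$ in $L^1(\gamma)$ and $\gamma$-a.e., with $\mu=f\cdot\gamma$; this establishes the absolute continuity assertion. To finish, I would pass to the limit in the tail bounds: for each fixed $t>0$ the function $x\mapsto \mathbf 1_{\{x\ge t\}}$ is $\gamma$-a.e.\ continuous on the set $\{f\ne t\}$, so by Fatou's lemma (or simply by lower semicontinuity of $\mathbf 1_{\{\cdot\ge t\}}$ and a.e.\ convergence)
\[
\gamma(f\ge t)\le \liminf_{n\to\infty}\gamma_n(f_n\ge t),
\]
and similarly the exponential Orlicz integrals $\int e^{\varepsilon[\log\max\{f,1\}]^{\alpha}}\,d\gamma$ are bounded above by $\liminf_n$ of their finite-dimensional counterparts.

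The main obstacle I anticipate is the absolute continuity step: the finite-dimensional theorem gives densities $f_n$ on $\mathbb{R}^n$, but one needs to know that these really are conditional expectations of a single limiting density $f$, i.e.\ that the projective system of absolutely continuous measures $\mu_n$ comes from an absolutely continuous $\mu$. This is exactly what uniform $L^p$-integrability of the martingale $\{f_n\}$ provides, and it is precisely the $L^p$-bound from part $(i)$ of the finite-dimensional theorem (or a mild version of it obtained under $|v|_H\in L^1(\mu)$ plus the relevant Orlicz hypothesis) that supplies this uniform bound. Once absolute continuity is secured, the rest is a routine passage to the limit.
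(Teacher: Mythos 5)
Your proposal follows essentially the same route as the paper: project onto $\mathbb{R}^n$, use Jensen's inequality to control the Orlicz norms of the conditional drifts $v^n$, apply the dimension-free finite-dimensional theorem to the densities $f_n$, and pass to the limit via martingale convergence. The one caveat is that the corollary asserts $\mu=f\cdot\gamma$ already under the bare hypothesis $|v|_H\in L^1(\mu)$, where uniform $L^p$-boundedness is unavailable; the paper secures uniform integrability of the martingale $\{f_n\}$ in that case by citing \cite{BSS19}, while under the Orlicz hypotheses of (i)--(iii) your $L^p$-bound argument suffices (and your appeal to ``lower semicontinuity'' of $\mathbf{1}_{\{\cdot\ge t\}}$ should be replaced by the standard passage through $\{f>s\}$, $s<t$, since that indicator is upper, not lower, semicontinuous).
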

\begin{proof}
The measures $\mu_n$ are given by densities $f_n$
with respect to the standard Gaussian measures $\gamma_n$ on $\mathbb{R}^n$.
The sequence $\{f_n\}$ is  a martingale
with respect to the Gaussian measure $\gamma$ and the sequence of $\sigma$-algebras generated by the projections
to~$\mathbb{R}^n$. According to \cite{BSS19}, this sequence
is uniformly integrable, hence  converges in  $L^1(\gamma)$ to some function $f\in L^1(\gamma)$.
It is readily seen that $\mu=f\cdot\gamma$. Convergence also holds in all $L^p(\gamma)$.
Moreover, by Jensen's inequality for conditional expectations there hold
uniform bounds on the Orlicz norms of $|v_n|_H$, which imply the corresponding
bounds for $f_n$ and consequently for~$f$.
\end{proof}

\begin{corollary}
If $\mu$ is a  probability measure satisfying the equation {\rm(\ref{ie1})}
and the hypotheses of Theorem~{\rm\ref{t-grad}} are fulfilled
with $|v|_H$ in place of $|v|$, then the conclusion of that corollary
is true.
\end{corollary}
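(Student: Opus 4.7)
The plan is to reduce to the finite-dimensional estimate of Theorem~\ref{t-grad} via the projection scheme already used for the preceding corollary, and then to pass to the limit exploiting the dimension-free character of the constants $C_1$ and $C_2$. First I would take the projections $\mu_n$ of $\mu$ onto $\mathbb{R}^n$; these have the form $\mu_n=f_n\cdot\gamma_n$ with $f_n=E[f\,|\,\pi_n]$ and, as explained in the paper, they satisfy the finite-dimensional Fokker--Planck--Kolmogorov equation on $\mathbb{R}^n$ with drift $-x+v^n$, where $v^n=(E_nv_1,\ldots,E_nv_n)$ and $E_n$ denotes conditional expectation (under $\mu$) with respect to the $\sigma$-algebra generated by the first $n$ coordinates.

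Next I would verify that the Orlicz norm of $|v^n|$ with respect to $\mu_n$ is dominated by that of $|v|_H$ with respect to $\mu$. This uses the pointwise bound $|v^n|^2\le E_n|v|_H^2$ (obtained by coordinatewise conditional Jensen) together with the fact that $s\mapsto\psi_m(\sqrt{s})=e^{s^{m/2}}-1$ is convex for $m\ge 2$, so that a further application of conditional Jensen yields
$$\int_{\mathbb{R}^n}\psi_m\bigl(|v^n|/\lambda\bigr)\,d\mu_n\le \int\psi_m\bigl(|v|_H/\lambda\bigr)\,d\mu$$
for every $\lambda>0$. Hence $\|\,|v^n|\,\|_{L_{\psi_m}(\mu_n)}\le \|\,|v|_H\,\|_{L_{\psi_m}(\mu)}=:\Lambda$. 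Theorem~\ref{t-grad} applied to $f_n$ then delivers the dimension-free bound
$$\|\nabla f_n\|_{L^p(\gamma_n)}\le C_1\Lambda\exp\bigl(C_2\Lambda^{2/(1-2/m)}\bigr)=:B,$$
and the same bound holds for the cylindrical extension of $f_n$ viewed as an element of $W^{p,1}(\gamma)$ on $\mathbb{R}^\infty$, since its $H$-gradient has the first $n$ coordinates equal to $\nabla f_n$ and the remaining ones equal to~$0$.

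For the passage to the limit, the preceding corollary guarantees $f\in L^p(\gamma)$ for every admissible $p$ and that the martingale $\{f_n\}$ converges to $f$ in each such $L^p(\gamma)$. Since $L^p(\gamma;H)$ is reflexive for $p\in(1,+\infty)$, a subsequence of $\nabla_H f_{n_k}$ converges weakly to some $g\in L^p(\gamma;H)$ with $\|g\|_{L^p(\gamma;H)}\le B$. Passing to the limit in the Gaussian integration-by-parts identity
$$\int \partial_{x_i}f_n\cdot\varphi\,d\gamma=-\int f_n\bigl(\partial_{x_i}\varphi-x_i\varphi\bigr)\,d\gamma,\quad \varphi\in\mathcal{F}\mathcal{C}_0,$$
identifies $(g,e_i)_H$ with the distributional Gaussian derivative of $f$ along $e_i$ for each $i$, so $g=\nabla_H f$ and $f\in W^{p,1}(\gamma)$ with the claimed bound. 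The main obstacle is precisely this last identification; reflexivity of $L^p(\gamma;H)$ together with the strong $L^p(\gamma)$-convergence of $f_n$ and the nonlinear cylindrical test class $\mathcal{F}\mathcal{C}_0$ are what make the argument go through cleanly.
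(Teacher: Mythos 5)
Your proof is correct and follows essentially the same route as the paper: project onto $\mathbb{R}^n$, control the Orlicz norms of the conditioned drifts by conditional Jensen, apply the dimension-free bound of Theorem~\ref{t-grad} to $f_n$, and pass to the limit along the martingale $\{f_n\}$. The paper compresses the last step into an appeal to ``known properties of Sobolev spaces''; your weak-compactness and integration-by-parts identification of $\nabla_H f$ is precisely the standard argument behind that phrase.
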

\begin{proof}
In the proof of the previous corollary we have
$f_n\in W^{p,1}(\gamma_n)$ and there hold the stated
bounds on $\nabla f_n$.
By the known properties of Sobolev spaces the same bounds hold for~$\nabla f$.

It is worth mentioning that it is not necessary
to refer to the  finite-dimensional case, because the reasoning applied in the previous
section remains in force in the infinite-dimensional case once we have
the integrability of $f$ used there.
\end{proof}

Note again that the inclusion $f\in W^{2,1}(\gamma)$ follows from \cite{Shig87} and~\cite{BR95}
and the inclusions $f\in L^p(\gamma)$ with $p$ from some interval follow from~\cite{Hino98}. If
$|v|_H\in L^2(\mu)$, then $\mu=f\cdot\gamma$ with $\sqrt{f}\in W^{2,1}(\gamma)$ according
to~\cite{BR95}, and if $|v|_H\in L^1(\mu)$, then $f$ exists, but can fail to be in $W^{1,1}(\gamma)$.

In the case of an abstract centered Radon Gaussian measure $\gamma$ on a locally convex space~$X$,
having the Cameron--Martin subspace~$H$ (the subspace of vectors with
finite Cameron--Martin norm $|h|_H=\sup \{l(h)\colon l\in X^*, \, \|l\|_{L^2(\gamma)}\le 1\}$),
the same conclusion holds with the following change in the formulation:
in place of $v_i$ we consider the functions $l_i(v)$, where $\{l_i\}$ is an orthonormal
base in the dual space $X^*$ considered as a subspace in $L^2(\gamma)$ (it is known
that such a basis exists, see, e.g., \cite{B98}). The proof is the same, but it is not necessary
to repeat the proof, using instead the following fact (Tsirelson's theorem, see~\cite{B98}):
if $\gamma$ is not concentrated on a finite-dimensional subspace, then
the mapping $T\colon x\mapsto (l_i(x))$ from $X$ to $\mathbb{R}^\infty$
takes $\gamma$ to the standard Gaussian measure on $\mathbb{R}^\infty$
and it is a Borel isomorphism between two Borel linear subspaces of full measure,
in addition, its restriction is an isometry of the Cameron--Martin subspaces.

It is worth noting that the assertion from Proposition~\ref{p1} about bounded densities
for compactly supported perturbations
does not extend to the infinite-dimensional case. Indeed, let us take
functions $v_n$ on the real line such that $v_n(t)=2^{-n}$ if $|t|\le T_n$, where
$T_n$ will be large enough. If $|t|>T_n$, we set $v_n(t)=0$.
Let $f_n$ be the density of the solution to the equation with the drift $-t+v_n(t)$
with respect to the standard Gaussian measure with density $\varrho$. Then
$$
f_n(t)=\exp\biggl(\int_0^t v_n(s)\, ds+c_n\biggr),
$$
where $c_n$ is the normalization constant. On $[-T_n,T_n]$ we have
$$
f_n(t)=\exp(t2^{-n}+c_n).
$$
Take $T_n>4^n$ so large that the integral of $\exp(2^{-n} t)\varrho(t)$ over $[-T_n,T_n]$
is between $\exp(2^{-2n-1}-1)$ and $\exp(2^{-2n-1}+1)$,
which is possible, since
the integral of $\exp(2^{-n} t)\varrho(t)$ over $\mathbb{R}$ is $\exp(2^{-2n-1})$.
Then $c_n\ge  -2$, hence $f_n(4^n)\ge 2^n-2$. Taking $v_n(x)=v_n(x_n)$
on $\mathbb{R}^\infty$, we obtain a vector field with $|v|_H\le 1$ and compact
support in $\mathbb{R}^\infty$, for which the corresponding probability
solution has an unbounded density with respect to the standard Gaussian measure
(it equals $\prod_{n=1}^\infty f_n(x_n)$). Both measures can be also
regarded on the weighted Hilbert space of sequences with $\sum_{n=1}^\infty
(2T_n)^2 x_n^2<\infty$, in which $v$ also has compact support.

\begin{remark}
\rm
The same reasoning applies to more general measures on $\mathbb{R}^\infty$ in place of~$\gamma$,
namely, to any uniformly log-concave measure $\mu$, that is, a probability measure whose
projections on the spaces $\mathbb{R}^n$ have densities $e^{-W_n}$ with convex functions
$W_n$ such that $D^2W_n\ge \theta \cdot {\rm I}$ with a common constant $\theta>0$.
\end{remark}

{\bf Acknowledgements.}
This research is supported by the Russian Science Foundation Grant 17-11-01058 at
Lomonosov Moscow State University (the results in Section 2.1 and Section~3).
The second author is a winner of the ``Young Russian Mathematics'' contest and thanks its sponsors and jury.
The work of A.V. Shaposhnikov (the results in Section~2.2) was performed at the Steklov International Mathematical Center and supported
by the Ministry of Science and Higher Education of the Russian Federation (agreement no. 075-15-2019-1614).

\end{document}